\newtheorem{theorem}{Theorem}[section]
\newtheorem{lemma}{Lemma}[section]
\newtheorem{remark}{Remark}[section]
\newtheorem{proposition}{Proposition}[section]
\newtheorem*{thm}{Theorem A}
\newtheorem*{thmB}{Theorem B}
\numberwithin{equation}{section}
\newdimen\bibspace
\renewenvironment{thebibliography}[1]{%
 \section*{\refname 
       \@mkboth{\MakeUppercase\refname}{\MakeUppercase\refname}}%
     \list{\@biblabel{\@arabic\c@enumiv}}%
          {\settowidth\labelwidth{\@biblabel{#1}}%
           \leftmargin\labelwidth
           \advance\leftmargin\labelsep
           \itemsep\bibspace
           \parsep\z@skip     %
           \@openbib@code
           \usecounter{enumiv}%
           \let\p@enumiv\@empty
           \renewcommand\theenumiv{\@arabic\c@enumiv}}%
     \sloppy\clubpenalty4000\widowpenalty4000%
     \sfcode`\.\@m}
    {\def\@noitemerr
      {\@latex@warning{Empty `thebibliography' environment}}%
     \endlist}
           \newcommand{\ud}{\mathrm{d}}
\newcommand{\be}{\begin{equation}}      \newcommand{\ee}{\end{equation}}
\begin{document}

\title{A weighted Sobolev-Poincar\'e type trace inequality on Riemannian manifolds}

\author{Zhongwei Tang\thanks{Z. Tang is supported by National Natural Science Foundation of China (12071036).},\, Ning Zhou}

\date{}

\maketitle

\begin{abstract}
Given $(M, g)$ a smooth compact $(n+1)$-dimensional Riemannian manifold with boundary $\partial M$. Let $\rho$ be a defining function of $M$ and $\sigma \in(0,1)$. In this paper we study a weighted Sobolev-Poincar\'e type trace inequality corresponding to the embedding of $W^{1,2}(\rho^{1-2 \sigma}, M) \hookrightarrow L^{p}(\partial M)$, where $p=\frac{2 n}{n-2 \sigma}$. More precisely, under some assumptions on the manifold, we prove that there exists a constant $B>0$ such that, for all $u \in W^{1,2}(\rho^{1-2\sigma}, M)$,
$$
\Big(\int_{\partial M}|u|^{p} \,\ud s_{g}\Big)^{2/p} \leq \mu^{-1} \int_{M} \rho^{1-2 \sigma}|\nabla_{g} u|^{2}  \,\ud  v_{g}+B \Big|\int_{\partial M} |u|^{p-2}u  \,\ud s_{g}\Big|^{2/(p-1)}.
$$
This inequality is sharp in the sense that $\mu^{-1}$ cannot be replaced by any smaller constant. Moreover, unlike the classical Sobolev inequality, $\mu^{-1}$ does not depend on $n$ and $\sigma$ only, but depends on the manifold.
\end{abstract}
{\bf Key words:} Sobolev-Poincar\'e trace inequality, Sharp constant, Manifold with boundary.

{\noindent\bf Mathematics Subject Classification (2020)}\quad 46E35 · 35J70

\section{Introduction}

Let $(M, g)$ be a $(n+1)$-dimensional smooth compact Riemannian manifold with boundary $\partial M$, $n \geq 2$. There has been much work on the sharp Sobolev-type inequalities and sharp Sobolev-type trace inequalities on $(M,g)$ and their applications, see, for example, \cite{LiZhuSharp1997}, \cite{LiZhuSharp1998}, \cite{JinXiongSharp2013}, \cite{JinXiongA2015}, \cite{TangXiongZhou2021}, and the references therein.

In \cite{LiZhuSharp1997}, Li and Zhu established the sharp Sobolev trace inequalities corresponding to the embedding of $H^1(M) \hookrightarrow L^{\frac{2n}{n-1}}(\partial M)$, they proved that there exists a constant $A_1=A_1(M,g)>0$ such that, for any $u\in H^1(M)$,
\be\label{eq:LiZhu1997}
\Big(\int_{\partial M}|u|^{\frac{2n}{n-1}} \,\ud s_{g}\Big)^{\frac{n-1}{n}} \leq S(n) \int_{M}|\nabla_{g} u|^{2} \,\ud v_{g}+A_1 \int_{\partial M} u^{2} \,\ud s_{g},
\ee
where $S(n)=\frac{2}{n-1}\omega_{n}^{-{1}/{n}}$ is the best constant (see \cite{EscobarSharp1988} or \cite{BecknerSharp1993}), $\omega_{n}$ denotes the volume of the unit sphere $\mathbb{S}^n$ of $\mathbb{R}^{n+1}$, $\ud v_{g}$ is the volume form of $(M, g)$ and $\ud s_{g}$ is the induced volume form on $\partial M$.

Later, Holcman and Humbert \cite{HolcmanHumbertA2001} studied the following Sobolev-Poincar\'e type inequality:
\be\label{eq:HolcmanHumbert2001}
\Big(\int_{\partial M}|u|^{\frac{2n}{n-1}} \,\ud s_{g}\Big)^{\frac{n-1}{n}} \leq A \int_{M}|\nabla_g u|^{2}\,\ud v_g+B\Big|\int_{\partial M}|u|^{\frac{2}{n-1}} u\,\ud s_g\Big|^{\frac{2(n-1)}{n+1}}.
\ee
Contrary to \eqref{eq:LiZhu1997}, they showed that the best constant in \eqref{eq:HolcmanHumbert2001} does not depend on the dimension only, but depends on the geometry of the boundary.

Considerable effort has also been devoted to the study of Sobolev-Poincar\'e type inequalities on compact Riemannian manifolds without boundary, we refer to \cite{DruetHebeyVaugonSharp2001, HebeySharp2002, DruetHebeyAsymptotics2002, ZhuOn2004}, among others.

Let $\mathbb{R}_{+}^{n+1}=\{(x, t) \mid x \in \mathbb{R}^{n},\, t>0\}$, $\sigma\in (0,1)$ and $W^{1,2}(t^{1-2\sigma}, \mathbb{R}_{+}^{n+1})$ be the homogeneous weighted Sobolev space with weight $t^{1-2 \sigma}$, i.e., the closure of $C_{c}^{\infty}(\overline{\mathbb{R}}_{+}^{n+1})$ under the norm
$$
\|U\|_{W^{1,2}(t^{1-2 \sigma}, \mathbb{R}_{+}^{n+1})}=\Big(\int_{\mathbb{R}_{+}^{n+1}} t^{1-2 \sigma}(|U|^{2}+|\nabla U|^{2}) \,\ud x\,\ud t\Big)^{1/2}.
$$
The sharp weighted Sobolev trace inequality asserts that (see \cite{LiebSharp1983, CotsiolisTavoularisBest2004})
\be\label{SharpWSTineRn}
\|U(\cdot, 0)\|_{L^{\frac{2 n}{n-2 \sigma}}(\mathbb{R}^{n})}^2 \leq S(n, \sigma) \int_{\mathbb{R}_{+}^{n+1}} t^{1-2 \sigma}|\nabla U(x, t)|^{2} \,\ud x\,\ud t
\ee
for all $U\in W^{1,2}(t^{1-2 \sigma}, \mathbb{R}_{+}^{n+1})$, where
\be\label{eq:SNSigmadefi}
S(n,\sigma)=\frac{1}{2\pi^{\sigma}}\frac{\Gamma(\sigma)}{\Gamma(1-\sigma)}\frac{\Gamma((n-2 \sigma) / 2)}{\Gamma((n+2 \sigma) / 2)}\Big(\frac{\Gamma(n)}{\Gamma(n / 2)}\Big)^{{2 \sigma}/{n}}.
\ee

A function $\rho \in C^{\infty}(\overline{M})$ is called a defining function of $M$ if
$$
\rho>0\,\,\, \text { in }\, M, \quad \rho=0\,\,\, \text { on }\, \partial M, \quad \nabla_{g} \rho \neq 0\,\,\, \text { on }\, \partial M.
$$
The weighted Sobolev space $W^{1,2}(\rho^{1-2\sigma}, M)$ is defined as the closure of $C^{\infty}(\overline{M})$ under the norm
\be\label{eq:W12norm}
\|U\|_{W^{1,2}(\rho^{1-2 \sigma}, M)}=\Big(\int_{M} \rho^{1-2 \sigma}|\nabla_{g} U|^{2} \,\ud v_{g}+\int_{\partial M} U^{2} \,\ud s_g\Big)^{{1}/{2}}.
\ee
The weighted Sobolev space $W^{1,2}(t^{1-2 \sigma}, \mathbb{R}_{+}^{n+1})$ and $W^{1,2}(\rho^{1-2\sigma}, M)$ play important roles in the study of the fractional Nirenberg problem and the fractional Yamabe problem, respectively, see \cite{JinLiXiongOn2014, JinLiXiongOn2015, GonzalezQing2013, GonzalezWang2018, KimMussoWei2017, KimMussoWei2018, KimMussoWeiA2021} and references therein.

In \cite{JinXiongSharp2013}, Jin and Xiong established the sharp weighted Sobolev trace inequalities of type \eqref{SharpWSTineRn} on Riemannian manifolds with boundaries:

\begin{thm}
For $n \geq 2$, let $(M, g)$ be a $(n+1)$-dimensional smooth compact Riemannian manifold with boundary $\partial M$. Let $\sigma \in(0, 1/2]$, and $\rho$ be a defining function of $M$ satisfying $|\nabla_{g} \rho|=1$ on $\partial M$. Then there exists a constant ${A}_2={A}_2(M, g, \sigma, \rho)>0$ such that, for all $u \in W^{1,2}(\rho^{1-2 \sigma}, M)$,
\be\label{JX-ineq1}
\Big(\int_{\partial M}|u|^{\frac{2n}{n-2\sigma}} \,\ud s_{g}\Big)^{\frac{n-2\sigma}{n}} \leq S(n, \sigma) \int_{M} \rho^{1-2 \sigma}|\nabla_{g} u|^{2}  \,\ud  v_{g}+{A}_2 \int_{\partial M} u^{2}  \,\ud s_{g}.
\ee

If $\sigma \in(1/2, 1)$, $n \geq 4$ and $\partial M$ is totally geodesic. Let $\rho$ be a defining function of $M$ satisfying $\rho(x)=d(x)+O(d(x)^{3})$ as $d(x) \rightarrow 0$, where $d(x)$ denotes the distance between $x$ and $\partial M$ with respect to the metric $g$. Then \eqref{JX-ineq1} still holds for all $u \in W^{1,2}(\rho^{1-2 \sigma}, M)$.
\end{thm}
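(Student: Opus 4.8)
\bigskip

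\noindent\textbf{Proof proposal.} The plan is to follow the Aubin--Li--Zhu scheme for sharp embeddings with an additive remainder: reduce, by concentration-compactness, the validity of \eqref{JX-ineq1} for every $u$ to a delicate estimate for sequences that concentrate at a single boundary point, and there exploit the sharp Euclidean inequality \eqref{SharpWSTineRn} (recall $p=\frac{2n}{n-2\sigma}$). Observe first that for $\sigma=1/2$ the weight is $\rho^{1-2\sigma}\equiv1$ and $S(n,1/2)=S(n)$, so \eqref{JX-ineq1} is then exactly the Li--Zhu inequality \eqref{eq:LiZhu1997}; thus the content lies in $\sigma\neq1/2$. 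As a standard preliminary one records, for every $\varepsilon>0$, the almost-sharp inequality
\[
\Big(\int_{\partial M}|u|^{p}\,\ud s_g\Big)^{2/p}\le\big(S(n,\sigma)+\varepsilon\big)\int_M\rho^{1-2\sigma}|\nabla_g u|^2\,\ud v_g+A_\varepsilon\Big(\int_{\partial M}u^2\,\ud s_g+\int_M\rho^{1-2\sigma}u^2\,\ud v_g\Big),
\]
obtained by covering a collar of $\partial M$ by Fermi-coordinate balls of radius $\delta$ --- on which $g=\delta_{ij}+O(\delta)$ and, since $|\nabla_g\rho|=1$ on $\partial M$, $\rho^{1-2\sigma}=t^{1-2\sigma}(1+O(\delta))$ --- by taking a subordinate partition $\sum_i\eta_i^2=1$, applying \eqref{SharpWSTineRn} to each $\eta_i u$, and summing via the triangle inequality in $L^{p/2}(\partial M)$ (valid as $p/2\ge1$) together with $\sum_i\eta_i\nabla\eta_i=0$, the gradient-of-cutoff terms reducing to $\int_M\rho^{1-2\sigma}u^2$; one takes $\varepsilon\sim\delta$.

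Suppose now \eqref{JX-ineq1} fails for every $A>0$: there exist $u_k$ with $\|u_k\|_{L^p(\partial M)}=1$ and $S(n,\sigma)\int_M\rho^{1-2\sigma}|\nabla_g u_k|^2\,\ud v_g+k\int_{\partial M}u_k^2\,\ud s_g<1$. Then $(u_k)$ is bounded in $W^{1,2}(\rho^{1-2\sigma},M)$; by the compact trace embedding into $L^2(\partial M)$ and the compact weighted embedding into $L^2(M,\rho^{1-2\sigma}\,\ud v_g)$, a subsequence converges weakly to a function of zero trace, after subtracting which we may assume $u_k\rightharpoonup0$, $\int_{\partial M}u_k^2\to0$, $\int_M\rho^{1-2\sigma}u_k^2\to0$, $\|u_k\|_{L^p(\partial M)}=1$, and $\limsup_k S(n,\sigma)\int_M\rho^{1-2\sigma}|\nabla_g u_k|^2\le1$. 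Passing to the weak-$*$ limits $|u_k|^p\,\ud s_g\rightharpoonup\nu$ on $\partial M$ and $\rho^{1-2\sigma}|\nabla_g u_k|^2\,\ud v_g\rightharpoonup\mu$ on $\overline{M}$ and invoking the concentration-compactness principle with \eqref{SharpWSTineRn} applied locally near each atom, one gets $\nu(\{x\})^{2/p}\le S(n,\sigma)\mu(\{x\})$ at every atom $x\in\partial M$; since $\nu(\partial M)=1$ and $\mu(\overline{M})\le S(n,\sigma)^{-1}$, the strict convexity of $\tau\mapsto\tau^{p/2}$ forces $\nu=\delta_{x_0}$ and $\mu=S(n,\sigma)^{-1}\delta_{x_0}$ for a single $x_0\in\partial M$; in particular $\int_M\rho^{1-2\sigma}|\nabla_g u_k|^2\to S(n,\sigma)^{-1}$.

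The crux is a sharpened blow-up near $x_0$. Let $\lambda_k\to0$ be the concentration scale; in Fermi coordinates centred at $x_0$ the rescalings $\widetilde u_k(y,s):=\lambda_k^{(n-2\sigma)/2}u_k(\lambda_k y,\lambda_k s)$ converge, on every half-ball, in the weighted space to a nonzero extremal $U$ of \eqref{SharpWSTineRn}, so that $U(\cdot,0)=c\,(1+|\cdot|^2)^{-(n-2\sigma)/2}$ up to translation and dilation. Here the hypothesis $n>4\sigma$ --- which for $\sigma>1/2$ forces $n\ge4$ --- is exactly what makes $\int_{\R^n}U(\cdot,0)^2<\infty$ and the weighted moment integrals entering the corrections converge. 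Expanding the relevant functionals in $\lambda_k$ yields $\int_{\partial M}u_k^2\,\ud s_g=c^2\lambda_k^{2\sigma}\int_{\R^n}(1+|y|^2)^{-(n-2\sigma)}\,\ud y+o(\lambda_k^{2\sigma})$, while $\int_M\rho^{1-2\sigma}|\nabla_g u_k|^2$ differs from $S(n,\sigma)^{-1}$ only by geometric corrections coming from $g-\delta_{ij}$ and $\rho^{1-2\sigma}-t^{1-2\sigma}$. If $\sigma\le1/2$ these corrections are $O(\lambda_k)$, hence (as $2\sigma\le1$) bounded by $C\int_{\partial M}u_k^2\,\ud s_g$. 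If $\sigma>1/2$, the total geodesy of $\partial M$ makes $g=\delta_{ij}+O(t^2)$ in Fermi coordinates and the assumption $\rho=d+O(d^3)$ makes $\rho^{1-2\sigma}=t^{1-2\sigma}(1+O(t^2))$, so the corrections become $O(\lambda_k^2)$ and, since $2-2\sigma>0$, are again bounded by $C\int_{\partial M}u_k^2\,\ud s_g$, with $C=C(M,g,\sigma,\rho)$. In all cases $1-S(n,\sigma)\int_M\rho^{1-2\sigma}|\nabla_g u_k|^2\le C\int_{\partial M}u_k^2\,\ud s_g$; combined with $k\int_{\partial M}u_k^2\,\ud s_g<1-S(n,\sigma)\int_M\rho^{1-2\sigma}|\nabla_g u_k|^2$ and with $\int_{\partial M}u_k^2>0$ (otherwise $u_k$ would have zero trace but unit $L^p(\partial M)$ norm), this gives $k\le C$ for all large $k$ --- a contradiction.

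The main obstacle is precisely this last expansion. One must track \emph{all} the $O(\lambda_k)$ (resp.\ $O(\lambda_k^2)$) contributions to $\int_M\rho^{1-2\sigma}|\nabla_g u_k|^2$ --- those from the metric, from the weight, from the energy of $U$ outside the coordinate chart, and from the difference $\widetilde u_k-U$ --- upgrading what are a priori only $o(1)$ errors to the correct power of $\lambda_k$, and then check that their total is \emph{dominated} by the boundary term $\int_{\partial M}u_k^2\,\ud s_g\sim\lambda_k^{2\sigma}$ rather than overwhelming it. This is where the hypotheses are unavoidable: for $\sigma>1/2$ the mean-curvature contribution to the Dirichlet energy is of order $\lambda_k$, which is \emph{larger} than $\lambda_k^{2\sigma}$, so it must be annihilated by assuming $\partial M$ totally geodesic, and likewise the $O(d)$ correction of a generic defining function must be annihilated by $\rho=d+O(d^3)$; for $\sigma\le1/2$ the argument needs nothing beyond $|\nabla_g\rho|=1$ on $\partial M$ precisely because there $\lambda_k^{2\sigma}$ already dominates $\lambda_k$.
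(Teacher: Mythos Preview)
This statement is Theorem~A in the paper, and it is \emph{quoted} from Jin--Xiong \cite{JinXiongSharp2013} without proof; the present paper contains no argument for it, so there is no ``paper's own proof'' to compare your proposal against. Your outline is, however, precisely the Aubin--Li--Zhu contradiction-plus-blow-up scheme that Jin and Xiong themselves use: an almost-sharp inequality by partition of unity in Fermi coordinates (this is their Proposition~2.5, stated here as Theorem~B), then concentration at a single boundary point, then a refined blow-up expansion matching the geometric corrections against the $L^2(\partial M)$ term of order $\lambda_k^{2\sigma}$.

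One slip worth flagging: the assertion that ``$n>4\sigma$ --- which for $\sigma>1/2$ forces $n\ge4$'' is not correct; for $\sigma$ just above $1/2$ the condition $n>4\sigma$ only gives $n\ge3$, and for $\sigma$ close to $1$ it would give $n\ge5$. The integrability of $\int_{\R^n}U(\cdot,0)^2$ is indeed governed by $n>4\sigma$, but the threshold $n\ge4$ in the second part of the theorem comes rather from the convergence of the \emph{second}-moment integrals of $t^{1-2\sigma}|\nabla W_1|^2$ that appear once the $O(\lambda_k)$ term has been annihilated by total geodesy (cf.\ the estimates of the form \eqref{eq:important} and Lemma~\ref{lem:appendix1-1} in the appendix of this paper). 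Your closing paragraph is right that the real work --- upgrading the profile error $\widetilde u_k-U$ from $o(1)$ to the correct power of $\lambda_k$, and bounding the energy outside the chart --- is exactly where the effort in \cite{JinXiongSharp2013} lies; your sketch treats this as a black box, which is appropriate for a proposal but is the step that would need to be fleshed out.
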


Note that if $\sigma \in(1/2, 1)$, additional assumptions about the manifold are required in order to obtain sharp inequality \eqref{JX-ineq1}. However, the following conclusion holds for any $\sigma\in(0,1)$ without additional assumptions, see \cite[Proposition 2.5]{JinXiongSharp2013}.

\begin{thmB}
For $n \geq 2$, let $(M, g)$ be a $(n+1)$-dimensional smooth compact Riemannian manifold with boundary $\partial M$. Let $\sigma \in(0, 1)$, and $\rho$ be a defining function of $M$ satisfying $|\nabla_{g} \rho|=1$ on $\partial M$. Then for any $\varepsilon>0$, there exists a constant $A_{\varepsilon}>0$ such that, for all $u \in W^{1,2}(\rho^{1-2 \sigma}, M)$,
$$
\Big(\int_{\partial M}|u|^{\frac{2n}{n-2\sigma}} \,\ud s_{g}\Big)^{\frac{n-2\sigma}{n}} \leq (S(n, \sigma)+\varepsilon) \int_{M} \rho^{1-2 \sigma}|\nabla_{g} u|^{2}  \,\ud  v_{g}+A_{\varepsilon} \int_{\partial M} u^{2}  \,\ud s_{g}.
$$
\end{thmB}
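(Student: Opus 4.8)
The plan is to argue by contradiction. One localises $u$ with a partition of unity subordinate to small charts covering $\partial M$, applies on each chart the sharp Euclidean inequality \eqref{SharpWSTineRn}, and then exploits the compactness of the embedding $W^{1,2}(\rho^{1-2\sigma},M)\hookrightarrow L^{2}(\rho^{1-2\sigma},M)$ (a weighted Rellich--Kondrachov theorem) to make the localisation errors vanish in the limit. The contradiction step is essential: a direct partition-of-unity argument would leave a remainder of the form $\int_M\rho^{1-2\sigma}u^{2}\,\ud v_{g}$, which cannot be absorbed into the Dirichlet term with a constant comparable to the partition's gradient bound, whereas along the contradicting sequence this remainder simply tends to $0$.

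First I would prove a local version with vanishing remainder. For small $r>0$, fix a finite cover of $\partial M$ by Fermi-coordinate charts $\Phi_\ell\colon V_\ell\to\mathbb{R}_{+}^{n+1}$, $\ell=1,\dots,N$, with $\Phi_\ell(\partial M\cap V_\ell)\subset\{t=0\}$, $\Phi_\ell(V_\ell)$ contained in a half-ball of radius $O(r)$, $g_{ij}=\delta_{ij}+O(r)$ and $\sqrt{\det g}=1+O(r)$. The hypothesis $|\nabla_{g}\rho|=1$ on $\partial M$ enters decisively here: it forces $\partial_t\rho=1$ on $\{t=0\}$, hence $\rho(y,t)=t+O(t^{2})$, and therefore $\rho^{1-2\sigma}=t^{1-2\sigma}\bigl(1+O(r)\bigr)$ throughout $V_\ell$. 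Transplanting any $w\in W^{1,2}(\rho^{1-2\sigma},M)$ with $\operatorname{supp}w\subset V_\ell$ to $\mathbb{R}_{+}^{n+1}$, extending by zero, applying \eqref{SharpWSTineRn}, and tracking the $(1+O(r))$ distortions of $g$, of $\,\ud v_{g}$, of $\,\ud s_{g}$ and of the weight, one obtains
\[
\Bigl(\int_{\partial M}|w|^{p}\,\ud s_{g}\Bigr)^{2/p}\le\bigl(S(n,\sigma)+\omega(r)\bigr)\int_M\rho^{1-2\sigma}|\nabla_{g} w|^{2}\,\ud v_{g},
\]
with $\omega(r)\to0$ as $r\to0$. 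Fix also a smooth partition with $\operatorname{supp}a_\ell\subset V_\ell$, $\sum_\ell a_\ell^{2}\le1$ on $M$ and $\sum_\ell a_\ell^{2}=1$ on $\partial M$, and put $\Lambda:=\sum_\ell|\nabla_{g} a_\ell|^{2}\in L^{\infty}(M)$.

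Now suppose the conclusion fails for some $\varepsilon>0$: there are $u_k\in W^{1,2}(\rho^{1-2\sigma},M)$ with $\int_{\partial M}|u_k|^{p}\,\ud s_{g}=1$ and $(S(n,\sigma)+\varepsilon)\int_M\rho^{1-2\sigma}|\nabla_{g} u_k|^{2}\,\ud v_{g}+k\int_{\partial M}u_k^{2}\,\ud s_{g}<1$. Replacing each $u_k$ by the minimiser of $\int_M\rho^{1-2\sigma}|\nabla_{g} v|^{2}\,\ud v_{g}$ among $v$ with $v|_{\partial M}=u_k|_{\partial M}$ (which only decreases the left side of this last inequality) we may take $u_k$ to be $\rho^{1-2\sigma}$-harmonic; then $\int_M\rho^{1-2\sigma}|\nabla_{g} u_k|^{2}\,\ud v_{g}<(S(n,\sigma)+\varepsilon)^{-1}$ and $\int_{\partial M}u_k^{2}\,\ud s_{g}<1/k$, so $(u_k)$ is bounded in $W^{1,2}(\rho^{1-2\sigma},M)$. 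Along a subsequence, $u_k\rightharpoonup u$ there and $u_k\to u$ in $L^{2}(\rho^{1-2\sigma},M)$; since the trace operator is bounded (hence weakly continuous) and $u_k|_{\partial M}\to0$ in $L^{2}(\partial M)$, we get $u|_{\partial M}=0$, and a $\rho^{1-2\sigma}$-harmonic function with zero boundary trace vanishes, so $u\equiv0$. Hence $u_k\to0$ in $L^{2}(\rho^{1-2\sigma},M)$ and $\int_M\rho^{1-2\sigma}\Lambda u_k^{2}\,\ud v_{g}\to0$. Finally, for any $\tau>0$, using $\sum_\ell a_\ell^{2}=1$ on $\partial M$, the triangle inequality in $L^{p/2}(\partial M)$ (note $p/2\ge1$), the local inequality, and $|\nabla_{g}(a_\ell u_k)|^{2}\le(1+\tau)a_\ell^{2}|\nabla_{g} u_k|^{2}+C_\tau|\nabla_{g} a_\ell|^{2}u_k^{2}$ together with $\sum_\ell a_\ell^{2}\le1$,
\[
1\le\sum_\ell\Bigl(\int_{\partial M}|a_\ell u_k|^{p}\,\ud s_{g}\Bigr)^{2/p}\le\bigl(S(n,\sigma)+\omega(r)\bigr)\Bigl[(1+\tau)\int_M\rho^{1-2\sigma}|\nabla_{g} u_k|^{2}\,\ud v_{g}+C_\tau\int_M\rho^{1-2\sigma}\Lambda u_k^{2}\,\ud v_{g}\Bigr].
\]
Letting $k\to\infty$, the first bracketed integral is $\le(1+\tau)(S(n,\sigma)+\varepsilon)^{-1}$ and the second tends to $0$, so $1\le\bigl(S(n,\sigma)+\omega(r)\bigr)(1+\tau)\,(S(n,\sigma)+\varepsilon)^{-1}$; since this holds for all $r,\tau>0$, sending $r\to0$ and then $\tau\to0$ gives $1\le S(n,\sigma)/(S(n,\sigma)+\varepsilon)<1$, a contradiction, and the theorem follows.

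I expect the main obstacle to be the local inequality of Step~1 --- specifically, certifying that the remainder $\omega(r)$ genuinely tends to $0$; this rests entirely on the normalisation $|\nabla_{g}\rho|=1$ on $\partial M$, without which one only recovers $S(n,\sigma)$ multiplied by a power of the boundary values of $|\nabla_{g}\rho|$, so the clean constant would be lost. A subtler structural point is the reduction to $\rho^{1-2\sigma}$-harmonic competitors: it forces the weak limit $u$ to vanish, which is exactly what annihilates the localisation error $C_\tau\int_M\rho^{1-2\sigma}\Lambda u_k^{2}\,\ud v_{g}$ in the limit; were $u$ allowed to be nonzero, the surviving term $C_\tau\int_M\rho^{1-2\sigma}\Lambda u^{2}\,\ud v_{g}$ would obstruct the passage $\tau\to0$, since $C_\tau\to\infty$.
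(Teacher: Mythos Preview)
The paper does not prove Theorem~B at all; it is quoted verbatim as \cite[Proposition~2.5]{JinXiongSharp2013} and used only as an input in the proof of Proposition~\ref{pro:2}. So there is no in-paper proof to compare against.

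Your argument is correct. The skeleton---cover $\partial M$ by small Fermi charts, pull back to the half-space, and invoke \eqref{SharpWSTineRn} with $O(r)$ distortions of $g$, $\ud v_g$, $\ud s_g$ and the weight (the last using $|\nabla_g\rho|=1$ on $\partial M$ to get $\rho^{1-2\sigma}=t^{1-2\sigma}(1+O(r))$)---is the standard route to an ``almost sharp'' inequality of this kind, and is essentially what Jin--Xiong do. Where you deviate is in the handling of the localisation remainder $\int_M\rho^{1-2\sigma}\Lambda u^2\,\ud v_g$: rather than bounding it directly (e.g.\ via a collar estimate of the form $\int_{\{\rho<\delta\}}\rho^{1-2\sigma}u^2\,\ud v_g\le C_\delta\int_{\partial M}u^2\,\ud s_g+C\delta^2\int_M\rho^{1-2\sigma}|\nabla_g u|^2\,\ud v_g$, which absorbs into the Dirichlet term after choosing $\delta$ small), you run a contradiction and replace each $u_k$ by its $\rho^{1-2\sigma}$-harmonic extension. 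This forces the weak limit to vanish, so the weighted interior $L^2$ term goes to zero along the sequence and no quantitative collar inequality is needed. Both approaches work; yours trades a hands-on estimate for a soft compactness step.

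Two small points worth making explicit in a write-up: (i) the norm \eqref{eq:W12norm} contains no interior $L^2$ piece, so the compact embedding $W^{1,2}(\rho^{1-2\sigma},M)\hookrightarrow L^2(\rho^{1-2\sigma},M)$ you invoke rests on an equivalence of norms (a weighted Poincar\'e inequality) together with a weighted Rellich--Kondrachov theorem for the $A_2$ weight $\rho^{1-2\sigma}$---both standard, but deserving a citation; (ii) the existence of the $\rho^{1-2\sigma}$-harmonic extension with prescribed trace is immediate from the direct method once one knows the trace space of $W^{1,2}(\rho^{1-2\sigma},M)$ is well defined, which again is standard for $A_2$ weights.
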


From now on we denote $p=2 n /(n-2 \sigma)$ for simplicity.

In this paper, we study a weighted Sobolev-Poincar\'e type trace inequality obtained by replacing the $L^2$ remainder term in \eqref{JX-ineq1} by another nonlinear term.

Before  stating our main results, let us first denote
\be\label{eq:mudefinition}
\mu:=\inf _{u \in \Lambda} \frac{\int_{M}\rho^{1-2\sigma}|\nabla_{g} u|^{2} \,\ud v_{g}}{(\int_{\partial M}|u|^{p}\,\ud s_{g})^{2/p}},
\ee
where
$$
\Lambda:=\Big\{u \in W^{1,2}(\rho^{1-2\sigma}, M) \mid \int_{\partial M}|u|^{p-2} u\,\ud s_{g}=0,\, u\not\equiv 0\, \text{ on }\, \partial M\Big\}.
$$
Our main result is as follows:
\begin{theorem}\label{thm}
Let $(M, g)$ be a $(n+1)$-dimensional smooth compact Riemannian manifold with boundary $\partial M$, $n\geq 2$. Let $\sigma \in(0, 1)$, and $\rho$ be a defining function of $M$ with $|\nabla_{g} \rho|=1$ on $\partial M$.  Then there exists a constant $B>0$ such that, for all $u \in W^{1,2}(\rho^{1-2\sigma}, M)$,
\be\label{eq:maininequality}
\Big(\int_{\partial M}|u|^{p} \,\ud s_{g}\Big)^{2/p} \leq \mu^{-1} \int_{M} \rho^{1-2 \sigma}|\nabla_{g} u|^{2}  \,\ud  v_{g}+B \Big|\int_{\partial M} |u|^{p-2}u  \,\ud s_{g}\Big|^{2/(p-1)},
\ee  under the following conditions satisfied:
\begin{enumerate}[(1)]
  \item
$n=3$, $\sigma\in (0,1/2)$, and $H(P) > 0$ at some point $P \in \partial M$,
  \item
or $n \geq 4$, $\sigma\in (0, 1)$, and $H(P) > 0$ at some point $P \in \partial M$,
  \item
or $n \geq 5$, $\sigma\in (0, 1)$, $H(P)=0$ and
$$
\frac{3n^2-6n-4\sigma^2+4}{12(1-\sigma)(n-1)(n-2-2\sigma)}\bar{R}(P)+\|\pi\|^2(P)+
\frac{3n-2-2\sigma}{3n-6-6\sigma}R_{tt}(P)>0
$$
at some point $P \in \partial M$,
\end{enumerate}
where $H$ is the mean curvature of $\partial M$, $\bar{R}$ denotes the scalar curvature for the metric induced by $g$ on $\partial M$, $\pi$ is the second fundamental form of $\partial M$, and $R_{tt}$ is the component of the Ricci curvature tensor in $M$.

\end{theorem}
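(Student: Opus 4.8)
The plan is to argue by contradiction, following the Holcman--Humbert strategy adapted to the weighted setting. Suppose \eqref{eq:maininequality} fails for every $B>0$. Then for each integer $j$ there exists $u_j \in W^{1,2}(\rho^{1-2\sigma},M)$ with
$$
\Big(\int_{\partial M}|u_j|^p\,\ud s_g\Big)^{2/p} > \mu^{-1}\int_M \rho^{1-2\sigma}|\nabla_g u_j|^2\,\ud v_g + j\,\Big|\int_{\partial M}|u_j|^{p-2}u_j\,\ud s_g\Big|^{2/(p-1)}.
$$
By homogeneity I normalize $\int_{\partial M}|u_j|^p\,\ud s_g = 1$, so $\int_M \rho^{1-2\sigma}|\nabla_g u_j|^2\,\ud v_g < \mu$ stays bounded and $\int_{\partial M}|u_j|^{p-2}u_j\,\ud s_g \to 0$. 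To control the full $W^{1,2}(\rho^{1-2\sigma},M)$-norm (the seminorm alone does not bound the trace) I would first use Theorem B: with $\varepsilon$ fixed, $1 = (\int|u_j|^p)^{2/p} \le (S(n,\sigma)+\varepsilon)\int\rho^{1-2\sigma}|\nabla_g u_j|^2 + A_\varepsilon\int_{\partial M}u_j^2$, which together with the seminorm bound forces $\int_{\partial M}u_j^2$ to be bounded away from $0$; combined with interpolation this yields a uniform bound on $\|u_j\|_{W^{1,2}(\rho^{1-2\sigma},M)}$. Hence, after passing to a subsequence, $u_j \rightharpoonup u$ weakly in $W^{1,2}(\rho^{1-2\sigma},M)$, strongly in $L^2(\partial M)$ (the trace embedding into $L^2(\partial M)$ is compact), and $|u_j|^{p-2}u_j \rightharpoonup |u|^{p-2}u$ weakly in $L^{p/(p-1)}(\partial M)$, so the limit satisfies $\int_{\partial M}|u|^{p-2}u\,\ud s_g = 0$.

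Next I would run a concentration-compactness / Brezis--Lieb dichotomy on the boundary traces. Either $u_j \to u$ strongly in $L^p(\partial M)$, or mass escapes to a finite set of concentration points. In the strong-convergence case, $\int_{\partial M}|u|^p = 1$ so $u\not\equiv 0$ on $\partial M$, hence $u \in \Lambda$; by weak lower semicontinuity of the seminorm and the definition of $\mu$,
$$
\mu \le \frac{\int_M \rho^{1-2\sigma}|\nabla_g u|^2\,\ud v_g}{(\int_{\partial M}|u|^p\,\ud s_g)^{2/p}} \le \liminf_j \int_M \rho^{1-2\sigma}|\nabla_g u_j|^2\,\ud v_g \le \mu,
$$
and the strict inequality in the contradiction hypothesis (the term $j|\!\int|u_j|^{p-2}u_j|^{2/(p-1)} \ge 0$) still needs to be exploited to get a genuine contradiction — here one shows $\mu$ is not attained in the degenerate regime, or more precisely that equality would force $u_j \to u$ in seminorm too and then the $L^{p-1}$-linear-functional term cannot be made to dominate. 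In the concentration case, the standard Cherrier/Escobar-type argument localizes near a concentration point $P$, rescales using Fermi coordinates with respect to $\rho$, and compares the concentrated energy to the sharp constant $S(n,\sigma)^{-1}$ on the half-space model \eqref{SharpWSTineRn}; the upshot is that concentration would require $\mu \ge S(n,\sigma)^{-1}$, i.e., $\mu^{-1} \le S(n,\sigma)$. I then need the reverse: that under hypotheses (1)--(3) one has the \emph{strict} inequality $\mu^{-1} > S(n,\sigma)$, which rules out concentration and forces the strong-convergence alternative.

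The main obstacle, and the place where hypotheses (1)--(3) enter, is precisely this strict inequality $\mu^{-1} > S(n,\sigma)$, equivalently $\mu < S(n,\sigma)^{-1}$. To establish it I would construct explicit test functions in $\Lambda$ concentrating at a point $P \in \partial M$: take the extremal bubble $U_\varepsilon$ for the sharp half-space inequality \eqref{SharpWSTineRn} (a Caffarelli--Silvestre-type profile), transplant it to $M$ via Fermi coordinates adapted to $\rho$, truncate, and add a small correction to enforce the constraint $\int_{\partial M}|u|^{p-2}u\,\ud s_g = 0$ (this correction costs a lower-order term since the bubble's signed trace integral is itself small after the constraint-fixing, or one subtracts a constant-sign bump supported away from $P$). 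Expanding the Rayleigh quotient in $\varepsilon$ produces $S(n,\sigma)^{-1}$ at leading order, and the next-order term carries the local geometry: for $n=3$, $\sigma<1/2$ or $n\ge4$ the correction is governed by the mean curvature $H(P)$ (the $-cH(P)\varepsilon$ term lowers the quotient when $H(P)>0$), while when $H(P)=0$ and $n\ge5$ one must push the expansion to the next order, where the combination of $\bar R(P)$, $\|\pi\|^2(P)$ and $R_{tt}(P)$ in the stated inequality appears with a positive sign, again lowering the quotient below $S(n,\sigma)^{-1}$. These curvature expansions are delicate but parallel the computations in \cite{JinXiongSharp2013}; the dimensional restrictions ($n\ge5$, and $n=3$ only for $\sigma<1/2$) and the exact coefficients in condition (3) come from the integrability of the bubble's tail against the curvature terms in Fermi coordinates. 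Once $\mu < S(n,\sigma)^{-1}$ is known, concentration is excluded, the strong-convergence alternative holds, $u\in\Lambda$ realizes $\mu$, and tracking the error terms yields the desired contradiction, completing the proof of \eqref{eq:maininequality}.
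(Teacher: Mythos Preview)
Your test-function step for proving $\mu<S(n,\sigma)^{-1}$ is essentially the paper's approach: transplant the half-space bubble $W_\varepsilon$ via Fermi coordinates, subtract a constant $\mu_\varepsilon$ to force $\int_{\partial M}|\phi_\varepsilon|^{p-2}\phi_\varepsilon=0$, and expand the Rayleigh quotient. The geometric conditions (1)--(3) enter exactly where you say.

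The gap is in your contradiction argument, and you flag it yourself without resolving it. Once you rule out concentration (which, given $\mu<S(n,\sigma)^{-1}$, does work via the Brezis--Lieb splitting you sketch), the strong-convergence alternative yields a limit $u\in\Lambda$ with $\int_{\partial M}|u|^p=1$ and, by lower semicontinuity together with the bound $\int_M\rho^{1-2\sigma}|\nabla_g u_j|^2<\mu$, exactly $\int_M\rho^{1-2\sigma}|\nabla_g u|^2=\mu$. So $u$ is a minimizer for $\mu$ and the inequality \eqref{eq:maininequality} holds with \emph{equality} at $u$. That is not a contradiction: nothing prevents $j\,|\int|u_j|^{p-2}u_j|^{2/(p-1)}\to 0$ while each $u_j$ violates the inequality by a vanishing margin. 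Your suggestion to ``show $\mu$ is not attained in the degenerate regime'' or to exploit strong seminorm convergence does not close this; indeed the existence theorem in the paper shows $\mu$ \emph{is} attained.

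The paper closes this gap by a different mechanism: instead of working with an arbitrary violating sequence, it replaces $u_j$ by the \emph{minimizers} $u_\alpha$ of the functional
\[
I_\alpha(u)=\frac{\int_M\rho^{1-2\sigma}|\nabla_g u|^2+\alpha\,|\int_{\partial M}|u|^{p-2}u|^{2/(p-1)}}{(\int_{\partial M}|u|^p)^{2/p}},
\]
which exist because $\xi_\alpha:=\inf I_\alpha<\mu<S(n,\sigma)^{-1}$. These minimizers satisfy an Euler--Lagrange equation, and \emph{integrating} its boundary condition over $\partial M$ gives the identity
\[
\alpha\,C_\alpha^{\frac{4-2p}{p-1}}\int_{\partial M}|u_\alpha|^{p-2}\,\ud s_g=\xi_\alpha<S(n,\sigma)^{-1},\qquad C_\alpha:=\Big|\int_{\partial M}|u_\alpha|^{p-2}u_\alpha\,\ud s_g\Big|.
\]
Since $C_\alpha\to 0$ and $\tfrac{4-2p}{p-1}<0$, the left side forces $\int_{\partial M}|u_\alpha|^{p-2}\to 0$; by compact convergence in $L^{p-2}(\partial M)$ this gives $\int_{\partial M}|u|^{p-2}=0$, contradicting $u\not\equiv 0$. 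This integrated Euler--Lagrange identity is the missing idea in your proposal; without passing to minimizers you have no equation to integrate, and the strong-convergence branch does not yield a contradiction.
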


In fact, under the conditions of Theorem \ref{thm}, we can prove that $\mu<S(n,\sigma)^{-1}$, where $S(n,\sigma)$ is defined in \eqref{eq:SNSigmadefi}. Then, by a standard variational arguments, we have the following existence results. 

\begin{theorem}\label{thm:2}
Under the conditions of Theorem \ref{thm},  there exists a function $u\in \Lambda$ such that
$$
\mu=\frac{\int_{M}\rho^{1-2\sigma}|\nabla_g u|^{2}\,\ud v_g}{(\int_{\partial M}|u|^{p}\,\ud s_g)^{2/p}}\quad \text { and }\quad \int_{\partial M}|u|^{p}\,\ud s_g=1.
$$
Moreover, $u$ satisfies
$$
\begin{cases}
\operatorname{div}_g(\rho^{1-2 \sigma} \nabla_g u)=0 & \text { in }\, M, \\
-\lim_{\rho \rightarrow 0^+} \rho^{1-2 \sigma} \partial_{\rho} u=\mu |u|^{p-2}u & \text { on }\, \partial M.
\end{cases}
$$
\end{theorem}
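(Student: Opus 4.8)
The plan is to realize $\mu$ as the value of a minimization problem and show that it is attained. First I would normalize: since the quotient in \eqref{eq:mudefinition} is $0$-homogeneous, one may restrict to the set $\Lambda_1 := \{u\in\Lambda : \int_{\partial M}|u|^p\,\ud s_g = 1\}$ and seek a minimizer of $E(u):=\int_M \rho^{1-2\sigma}|\nabla_g u|^2\,\ud v_g$ over $\Lambda_1$. Take a minimizing sequence $\{u_k\}$; the bound on $E(u_k)$ together with the normalization controls $\|u_k\|_{W^{1,2}(\rho^{1-2\sigma},M)}$ (the trace term is $1$), so after passing to a subsequence $u_k \rightharpoonup u$ weakly in $W^{1,2}(\rho^{1-2\sigma},M)$, and by the compactness of the trace embedding $W^{1,2}(\rho^{1-2\sigma},M)\hookrightarrow L^q(\partial M)$ for $q<p$ (and, crucially, by the strict inequality $\mu < S(n,\sigma)^{-1}$ asserted in the paragraph before the statement), $u_k \to u$ strongly in $L^p(\partial M)$. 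Strong $L^p$-convergence on $\partial M$ passes both constraints to the limit: $\int_{\partial M}|u|^{p-2}u\,\ud s_g = 0$ and $\int_{\partial M}|u|^p\,\ud s_g = 1$, so in particular $u\not\equiv 0$ and $u\in\Lambda_1$. Weak lower semicontinuity of the Dirichlet-type energy gives $E(u)\le \liminf E(u_k)=\mu$, while $u\in\Lambda_1$ forces $E(u)\ge\mu$; hence $E(u)=\mu$ and $u$ is a minimizer realizing the two displayed identities.

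Next I would derive the Euler--Lagrange equation. The functional $E$ and the two constraint functionals $G(u):=\int_{\partial M}|u|^p\,\ud s_g$, $\Phi(u):=\int_{\partial M}|u|^{p-2}u\,\ud s_g$ are $C^1$ on $W^{1,2}(\rho^{1-2\sigma},M)$ for $p>2$, with $G'(u)[\varphi] = p\int_{\partial M}|u|^{p-2}u\,\varphi\,\ud s_g$ and $\Phi'(u)[\varphi] = (p-1)\int_{\partial M}|u|^{p-2}\varphi\,\ud s_g$. Applying the Lagrange multiplier theorem at the constrained minimum $u$ yields constants $\lambda,\tau$ with $E'(u) = \lambda G'(u) + \tau\Phi'(u)$, i.e. in weak form
$$
2\int_M \rho^{1-2\sigma}\nabla_g u\cdot\nabla_g\varphi\,\ud v_g = \int_{\partial M}\big(\lambda p|u|^{p-2}u + \tau(p-1)|u|^{p-2}\big)\varphi\,\ud s_g
$$
for all $\varphi\in W^{1,2}(\rho^{1-2\sigma},M)$. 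Testing with $\varphi=u$ and using $G(u)=1$, $\Phi(u)=0$ gives $2\mu = \lambda p$, so $\lambda p = 2\mu$. To see $\tau=0$: one must check the constraint gradients are independent so the multiplier rule applies in this form, and then test against a suitable $\varphi$; alternatively, test with $\varphi=1$ (a valid test function since $1\in C^\infty(\overline M)\subset W^{1,2}(\rho^{1-2\sigma},M)$), which kills the left side by $\nabla_g 1 = 0$ and, using $\Phi(u)=0$, leaves $\tau(p-1)\int_{\partial M}|u|^{p-2}\,\ud s_g = 0$; since $u\not\equiv 0$ on $\partial M$ the integral is positive, so $\tau = 0$. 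Substituting back, the weak formulation becomes
$$
\int_M \rho^{1-2\sigma}\nabla_g u\cdot\nabla_g\varphi\,\ud v_g = \mu\int_{\partial M}|u|^{p-2}u\,\varphi\,\ud s_g \quad\text{for all }\varphi,
$$
which is exactly the weak form of the stated boundary value problem: taking $\varphi\in C_c^\infty(M)$ gives $\operatorname{div}_g(\rho^{1-2\sigma}\nabla_g u)=0$ in $M$, and then integrating by parts for general $\varphi$ identifies the weighted conormal derivative, giving $-\lim_{\rho\to 0^+}\rho^{1-2\sigma}\partial_\rho u = \mu|u|^{p-2}u$ on $\partial M$ in the appropriate (distributional/weak) sense.

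The main obstacle is the compactness of the minimizing sequence in $L^p(\partial M)$, since $p=2n/(n-2\sigma)$ is the critical trace exponent and the embedding $W^{1,2}(\rho^{1-2\sigma},M)\hookrightarrow L^p(\partial M)$ is not compact; a priori the minimizing sequence could concentrate at a boundary point and lose mass in the limit. This is precisely where the strict inequality $\mu < S(n,\sigma)^{-1}$ does the work: by the sharp inequality of Theorem A (or Theorem B) together with a concentration-compactness / second-concentration-function analysis in the spirit of Lions and of Li--Zhu \cite{LiZhuSharp1997} and Jin--Xiong \cite{JinXiongSharp2013}, concentration of a minimizing sequence at a point would force $\mu \ge S(n,\sigma)^{-1}$, contradicting the strict inequality; hence no concentration occurs and the embedding is compact along the sequence. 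Establishing $\mu < S(n,\sigma)^{-1}$ under the curvature conditions (1)--(3) — which is the sharpness part of Theorem \ref{thm} and is assumed available here — is the other delicate ingredient; it is proved by inserting carefully constructed test functions, built from the extremals of the Euclidean inequality \eqref{SharpWSTineRn} cut off near a boundary point $P$ with $H(P)>0$ (or with the stated curvature quantity positive when $H(P)=0$) and expanding the quotient to the relevant order in the concentration parameter.
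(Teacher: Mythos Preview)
Your proposal is correct and follows precisely the standard variational argument the paper has in mind: the paper itself omits the proof entirely, writing only that it ``is standard, see for example \cite{CherrierProbl1984, LiZhuSharp1997, LiZhuSharp1998, JinXiongSharp2013}.'' Your outline---minimize the weighted Dirichlet energy on $\Lambda_1$, use the strict inequality $\mu<S(n,\sigma)^{-1}$ (established in Section~3 under the curvature hypotheses) to rule out concentration and recover strong $L^p(\partial M)$ compactness of a minimizing sequence, then derive the Euler--Lagrange system and kill the second multiplier by testing with $\varphi\equiv 1$---is exactly the argument those references carry out in the analogous settings.
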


The proof of Theorem \ref{thm:2} is standard, see for example \cite{CherrierProbl1984, LiZhuSharp1997, LiZhuSharp1998, JinXiongSharp2013}, so we omit it.

\begin{remark}
Recall that the best constant in the sharp weighted Sobolev trace inequality \eqref{JX-ineq1} is $S(n,\sigma)$. In \eqref{eq:maininequality}, the best constant $\mu^{-1}>S(n,\sigma)$. On the other hand, if $n>6\sigma$, the exponent of the nonlinear term in the right hand side of \eqref{eq:maininequality} is less than 2.
\end{remark}

The rest of this paper is organized as follows. In Section 2, we prove some preliminary results related to the main inequality \eqref{eq:maininequality}. In Section 3, we prove Theorem \ref{thm} by constructing some test functions. In Appendix, we give some estimates required in the proof of Theorem \ref{thm}.

\section{Some preliminary results}
In this section, we present some preliminary results. For all $u \in W^{1,2}(\rho^{1-2 \sigma}, M)$ and $A, B > 0$, consider the following inequality
\be\label{SP-ineq}
\Big(\int_{\partial M}|u|^p \,\ud s_{g}\Big)^{2/p} \leq A \int_{M} \rho^{1-2 \sigma}|\nabla_{g} u|^{2}  \,\ud  v_{g}+B \Big|\int_{\partial M} |u|^{p-2}u  \,\ud s_{g}\Big|^{2/(p-1)}.
\ee
Define
$$
A_{0}=\inf \{A>0 \mid \exists\, B>0 \text { s.t. } \eqref{SP-ineq} \text { is valid}\},
$$
where by ``\eqref{SP-ineq} is valid'' we mean that \eqref{SP-ineq} holds with $A$ and $B$, for all $u \in W^{1,2}(\rho^{1-2 \sigma}, M)$. By definition, $A_0$ is the best constant for \eqref{SP-ineq} in the sense that \eqref{SP-ineq} does not hold with some $A^{\prime}<A_0$ in place of $A_0$.

Our first result in this section is about the validity of \eqref{SP-ineq}, which is:

\begin{proposition}\label{pro:1}
Let $(M, g)$ be a $(n+1)$-dimensional smooth compact Riemannian manifold with boundary $\partial M$, $n\geq 2$. Let $\sigma \in(0, 1)$, and $\rho$ be a defining function of $M$ with $|\nabla_{g} \rho|=1$ on $\partial M$. Then there exists $A, B>0$ such that \eqref{SP-ineq} is valid on $M$.
\end{proposition}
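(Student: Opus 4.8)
\section*{Proof proposal for Proposition \ref{pro:1}}

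The plan is to argue by contradiction and compactness, combining Theorem B (which already gives the weighted Sobolev trace inequality with an $L^2$ remainder, for any $\varepsilon>0$) with the compactness of the \emph{subcritical} trace embedding $W^{1,2}(\rho^{1-2\sigma},M)\hookrightarrow L^{q}(\partial M)$ for every $q<p$. Suppose \eqref{SP-ineq} fails for every pair $A,B>0$. Then in particular it fails with $A=B=m$ for each $m\in\mathbb{N}$, so there is $u_m\in W^{1,2}(\rho^{1-2\sigma},M)$ violating it; since a function with zero trace cannot violate \eqref{SP-ineq} (the left side would be $0$ and the right side $\ge 0$), $u_m$ has nonzero trace and may be normalized so that $\int_{\partial M}|u_m|^p\,\ud s_g=1$, whence
$$
1> m\int_{M}\rho^{1-2\sigma}|\nabla_{g}u_m|^{2}\,\ud v_g+m\,\Big|\int_{\partial M}|u_m|^{p-2}u_m\,\ud s_g\Big|^{2/(p-1)}.
$$
Consequently $\int_{M}\rho^{1-2\sigma}|\nabla_{g}u_m|^{2}\,\ud v_g\to 0$ and $\int_{\partial M}|u_m|^{p-2}u_m\,\ud s_g\to 0$. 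Since $\partial M$ is compact, Hölder's inequality gives $\int_{\partial M}u_m^{2}\,\ud s_g\le |\partial M|^{(p-2)/p}$, so $\{u_m\}$ is bounded in $W^{1,2}(\rho^{1-2\sigma},M)$.

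Passing to a subsequence, $u_m\rightharpoonup u$ weakly in $W^{1,2}(\rho^{1-2\sigma},M)$. The gradient map is bounded linear, hence weakly continuous, so $\nabla_{g}u_m\rightharpoonup\nabla_{g}u$ in the weighted $L^2$; together with $\nabla_{g}u_m\to 0$ strongly this forces $\nabla_{g}u\equiv 0$, i.e.\ $u\equiv c$ is constant (on $M$, assumed connected; otherwise argue componentwise). By the compact trace embedding, $u_m\to c$ strongly in $L^{2}(\partial M)$ and in $L^{p-1}(\partial M)$; in particular $\int_{\partial M}u_m^{2}\,\ud s_g\to c^{2}|\partial M|$. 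Using the elementary bound $\big||a|^{p-2}a-|b|^{p-2}b\big|\le(p-1)(|a|+|b|)^{p-2}|a-b|$ followed by Hölder (with the $L^{p-1}$ bound on $u_m$), the $L^{p-1}$-convergence yields $|u_m|^{p-2}u_m\to|c|^{p-2}c$ in $L^{1}(\partial M)$, so $\int_{\partial M}|u_m|^{p-2}u_m\,\ud s_g\to|c|^{p-2}c\,|\partial M|$. Comparing with $\int_{\partial M}|u_m|^{p-2}u_m\,\ud s_g\to 0$ gives $c=0$, hence $\int_{\partial M}u_m^{2}\,\ud s_g\to 0$.

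Now apply Theorem B with $\varepsilon=1$: there is a constant $C>0$ such that
$$
1=\Big(\int_{\partial M}|u_m|^{p}\,\ud s_g\Big)^{2/p}\le\big(S(n,\sigma)+1\big)\int_{M}\rho^{1-2\sigma}|\nabla_{g}u_m|^{2}\,\ud v_g+C\int_{\partial M}u_m^{2}\,\ud s_g .
$$
Letting $m\to\infty$, the right-hand side tends to $0$, contradicting the left-hand side being $1$. This contradiction proves that \eqref{SP-ineq} holds for suitable $A,B>0$. The one non-elementary ingredient, and the place where care is needed, is the compactness of the subcritical trace embedding for the $A_2$-Muckenhoupt-weighted space $W^{1,2}(\rho^{1-2\sigma},M)$ (the weight $\rho^{1-2\sigma}$ belongs to the Muckenhoupt class $A_2$ since $\sigma\in(0,1)$), which is standard and may be quoted from the literature (see \cite{JinXiongSharp2013} and the references therein); everything else amounts to bookkeeping, in particular tracking the power nonlinearity $t\mapsto|t|^{p-2}t$ under subcritical convergence.
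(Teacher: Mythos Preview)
Your proof is correct and follows essentially the same contradiction--compactness strategy as the paper: normalize a violating sequence, extract a weak limit, show the limit is constant, and derive a contradiction. The only difference is in the endgame: the paper upgrades the weak convergence to strong convergence in $W^{1,2}(\rho^{1-2\sigma},M)$ (via convergence of norms) and then passes to the limit in the critical $L^{p}(\partial M)$ norm to obtain simultaneously $\int_{\partial M}|u|^{p}\,\ud s_g=1$ and $\int_{\partial M}|u|^{p-2}u\,\ud s_g=0$, which is impossible for a constant; you instead stay at the subcritical level, use $L^{p-1}$ convergence to force $c=0$, and then invoke Theorem~B to contradict the normalization. Your route has the mild advantage of only needing the compact \emph{subcritical} trace embedding (plus Theorem~B), whereas the paper's step requires the continuous critical trace embedding; conversely, the paper's argument is self-contained and does not appeal to Theorem~B.
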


Our second aim in this section is to study the best constant $A_0$ associated with inequality \eqref{SP-ineq}. We are able to prove that $A_{0}$ does not depend only on $n$ and $\sigma$, but on the geometry of the manifold. In fact we have the following result.

\begin{proposition}\label{pro:2}
Assume as in Proposition \ref{pro:1}. Let $\mu$ is defined as in \eqref{eq:mudefinition}, namely
$$
\mu=\inf _{u \in \Lambda} \frac{\int_{M}\rho^{1-2\sigma}|\nabla_{g} u|^{2} \,\ud v_{g}}{(\int_{\partial M}|u|^{p}\,\ud s_{g})^{2/p}},
$$
where
$$
\Lambda=\Big\{u \in W^{1,2}(\rho^{1-2\sigma}, M) \mid \int_{\partial M}|u|^{p-2} u\,\ud s_{g}=0,\, u\not\equiv 0\, \text{ on }\, \partial M\Big\}.
$$
Then $A_{0}=\mu^{-1}$.
\end{proposition}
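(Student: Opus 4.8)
The plan is to establish the two inequalities $A_0 \le \mu^{-1}$ and $A_0 \ge \mu^{-1}$ separately. For the first, we want to show that $\mu^{-1}$ is an admissible choice of $A$ in \eqref{SP-ineq} for a suitable $B$; equivalently, that \eqref{eq:maininequality} holds. But this is exactly the content of Theorem \ref{thm}, which we are allowed to invoke. Hence $A_0 \le \mu^{-1}$, and in fact the passage from Theorem \ref{thm} to ``$A_0\le\mu^{-1}$'' is immediate from the definition of $A_0$ as an infimum. (If one wants Proposition \ref{pro:2} to be logically prior to Theorem \ref{thm}, one should instead argue directly that for every $A>\mu^{-1}$ the inequality \eqref{SP-ineq} holds with some $B=B(A)$, by a contradiction/compactness argument parallel to the proof of Proposition \ref{pro:1}; but as stated in the excerpt it is cleaner to cite Theorem \ref{thm}.)

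The substantive direction is $A_0 \ge \mu^{-1}$, i.e. that \eqref{SP-ineq} must \emph{fail} for every $A < \mu^{-1}$, no matter how large $B$ is chosen. First I would fix $A<\mu^{-1}$ and suppose, for contradiction, that \eqref{SP-ineq} holds with this $A$ and some $B>0$ for all $u\in W^{1,2}(\rho^{1-2\sigma},M)$. By the definition of $\mu$ as an infimum over $\Lambda$, pick a minimizing sequence: functions $u_k\in\Lambda$ with $\int_{\partial M}|u_k|^p\,\ud s_g = 1$, $\int_{\partial M}|u_k|^{p-2}u_k\,\ud s_g = 0$, and $\int_M \rho^{1-2\sigma}|\nabla_g u_k|^2\,\ud v_g \to \mu$. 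Feeding $u_k$ into \eqref{SP-ineq}, the nonlinear boundary term vanishes identically, so we get $1 \le A\int_M \rho^{1-2\sigma}|\nabla_g u_k|^2\,\ud v_g$ for every $k$; letting $k\to\infty$ yields $1 \le A\mu$, i.e. $A \ge \mu^{-1}$, contradicting $A<\mu^{-1}$. Therefore no such $A$ works, and $A_0\ge\mu^{-1}$. Combining the two bounds gives $A_0=\mu^{-1}$.

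The one technical point that needs care — and the place I expect the only real obstacle — is the existence of a minimizing sequence inside $\Lambda$ realizing $\mu$, and more basically that $\Lambda\neq\varnothing$ and $\mu$ is finite and positive. Nonemptiness of $\Lambda$ is elementary: take any $u$ with $\int_{\partial M}|u|^{p-2}u\,\ud s_g\neq 0$ and any $v$ supported near a disjoint boundary patch, then adjust a scalar multiple of $v$ added to $u$ (using that $t\mapsto \int_{\partial M}|u+tv|^{p-2}(u+tv)\,\ud s_g$ is continuous and changes the value) — or, more simply, note that any sign-changing boundary profile with the right symmetry lies in $\Lambda$. Positivity $\mu>0$ follows from the weighted Sobolev trace embedding (e.g. Theorem B with any fixed $\varepsilon$, or Proposition \ref{pro:1} itself restricted to $\Lambda$ where the $B$-term drops out), which bounds $(\int_{\partial M}|u|^p)^{2/p}$ by a constant times $\int_M\rho^{1-2\sigma}|\nabla_g u|^2 + \int_{\partial M}u^2$ and then one removes the lower-order term on $\Lambda$ by a standard argument, or simply uses that $\mu \ge S(n,\sigma)^{-1}$ is not needed — only $\mu>0$ is, which is clear since otherwise one could take $A=0$. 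Finiteness of $\mu$ is automatic from nonemptiness. None of this is deep, but it should be stated, since the whole argument for $A_0\ge\mu^{-1}$ is driven by testing \eqref{SP-ineq} against the minimizing sequence and one must know such a sequence exists; this is where I would spend the few lines of actual work.
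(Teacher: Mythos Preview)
The direction $A_0 \ge \mu^{-1}$ is handled correctly and matches the paper's argument: testing \eqref{SP-ineq} against functions in $\Lambda$ (where the nonlinear remainder vanishes) immediately forces any admissible $A$ to satisfy $A \ge \mu^{-1}$.

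The problem is with the other direction, and you have misjudged which half is substantive. In the paper, $A_0 \ge \mu^{-1}$ takes four lines, while $A_0 \le \mu^{-1}$ occupies a full page of concentration--compactness analysis. Your proposal to obtain $A_0 \le \mu^{-1}$ by citing Theorem~\ref{thm} fails because Theorem~\ref{thm} carries extra geometric hypotheses (positive mean curvature at a boundary point, or the curvature condition in case (3)) that Proposition~\ref{pro:2} does not assume; Proposition~\ref{pro:2} is stated under the hypotheses of Proposition~\ref{pro:1} only. So invoking Theorem~\ref{thm} proves strictly less than what is claimed.

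Your fallback --- ``a contradiction/compactness argument parallel to the proof of Proposition~\ref{pro:1}'' --- is far too optimistic and is where the genuine gap lies. In Proposition~\ref{pro:1} the violating sequence satisfies $\int_M \rho^{1-2\sigma}|\nabla_g u_\alpha|^2 \to 0$, forcing the weak limit to be constant and yielding strong convergence cheaply. Here one only obtains $\int_M \rho^{1-2\sigma}|\nabla_g u_\alpha|^2 < A^{-1}$ bounded, with $\int_{\partial M}|u_\alpha|^p = 1$, and the sequence can concentrate: there is no direct way to pass to a limit $u \in \Lambda$ with Rayleigh quotient below $\mu$. The paper's actual argument assumes $\mu^{-1} < A_0$, picks $S \in (\mu^{-1}, A_0)$, introduces auxiliary functionals $I_\alpha$ with infima $\xi_\alpha$, and uses the inequality $\mu \le S(n,\sigma)^{-1}$ (extracted from the test-function construction of Section~3, which gives this weak inequality without any curvature hypothesis) to guarantee the subcriticality $\xi_\alpha < S(n,\sigma)^{-1}$, so that minimizers $u_\alpha$ of $I_\alpha$ exist. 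It then runs a Br\'ezis--Lieb splitting together with Theorem~B to upgrade weak convergence $u_\alpha \rightharpoonup u$ to strong $L^p(\partial M)$ convergence, finally producing $u \in \Lambda$ with Rayleigh quotient at most $S^{-1} < \mu$, a contradiction. None of this machinery is hinted at in your sketch, and it is precisely where the real work lies.
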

Now a natural question is that in what conditions, the infimum $A_0=\mu^{-1}$  can be attained,  namely the inequality \eqref{eq:maininequality} holds. In fact we have the following proposition.

\begin{proposition}\label{pro:3}
Assume as in Proposition \ref{pro:1}.  If $\mu<S(n,\sigma)^{-1}$, where $S(n,\sigma)$ is defined in \eqref{eq:SNSigmadefi}.  Then there exists a constant $B>0$ such that, for all $u \in W^{1,2}(\rho^{1-2\sigma}, M)$, the inequality \eqref{eq:maininequality} holds.
\end{proposition}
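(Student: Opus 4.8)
The plan is to argue by contradiction, following the now-classical scheme of Aubin and of Li--Zhu. Suppose the conclusion fails. Then for every $j \in \mathbb{N}$ there exists $u_j \in W^{1,2}(\rho^{1-2\sigma}, M)$ for which the inequality \eqref{eq:maininequality} fails with the constant $B = j$ on the right-hand side; by homogeneity we may normalize so that $\int_{\partial M} |u_j|^p \, \ud s_g = 1$, and then
$$
1 > \mu^{-1} \int_M \rho^{1-2\sigma} |\nabla_g u_j|^2 \, \ud v_g + j \Big| \int_{\partial M} |u_j|^{p-2} u_j \, \ud s_g \Big|^{2/(p-1)}.
$$
The first consequence is that $\int_M \rho^{1-2\sigma}|\nabla_g u_j|^2 \, \ud v_g < \mu$ stays bounded, and the second is that $\int_{\partial M} |u_j|^{p-2} u_j \, \ud s_g \to 0$. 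Using the trace inequality \eqref{SP-ineq} (valid by Proposition \ref{pro:1}) together with the normalization, the sequence $(u_j)$ is bounded in $W^{1,2}(\rho^{1-2\sigma}, M)$, so after passing to a subsequence $u_j \rightharpoonup u$ weakly in $W^{1,2}(\rho^{1-2\sigma}, M)$, $u_j \to u$ strongly in $L^2(\partial M)$ (compact trace embedding), and $u_j \to u$ a.e. on $\partial M$.

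Next I would separate two cases according to whether the weak limit $u$ vanishes identically on $\partial M$ or not. If $u \not\equiv 0$ on $\partial M$: passing to the limit in $\int_{\partial M} |u_j|^{p-2} u_j \to 0$ — which requires controlling the boundary integrals, e.g. via the Brezis--Lieb lemma or a uniform $L^p(\partial M)$ bound plus a.e.\ convergence — gives $\int_{\partial M} |u|^{p-2} u \, \ud s_g = 0$, so $u \in \Lambda$ (after checking $u \not\equiv 0$). Then by weak lower semicontinuity of the Dirichlet energy and the definition \eqref{eq:mudefinition} of $\mu$,
$$
\mu \Big( \int_{\partial M} |u|^p \, \ud s_g \Big)^{2/p} \le \int_M \rho^{1-2\sigma} |\nabla_g u|^2 \, \ud v_g \le \liminf_j \int_M \rho^{1-2\sigma} |\nabla_g u_j|^2 \, \ud v_g \le \mu,
$$
so $\int_{\partial M} |u|^p \, \ud s_g \le 1$. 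Combining this with the failure inequality and a Brezis--Lieb decomposition of $1 = \int_{\partial M}|u_j|^p$ into $\int_{\partial M}|u|^p$ plus the mass of $v_j := u_j - u$ concentrating on the boundary, one extracts that the concentrating part carries Dirichlet energy at least $S(n,\sigma)^{-1}$ times its boundary $L^p$-mass (by the sharp Euclidean inequality \eqref{SharpWSTineRn} applied in normal coordinates, since $|\nabla_g \rho| = 1$ on $\partial M$), while the non-concentrating part $u$ carries at least $\mu$ times its mass; since $\mu < S(n,\sigma)^{-1}$, the total energy budget $\mu$ forces all the mass onto $u$ and none on $v_j$, giving strong convergence $u_j \to u$ on $\partial M$, hence $u \in \Lambda$ with $\int_{\partial M}|u|^p = 1$ and $\int_M \rho^{1-2\sigma}|\nabla_g u|^2 \le \mu$. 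But then \eqref{eq:maininequality} holds for $u$ with $B = 0$, contradicting that it failed. If instead $u \equiv 0$ on $\partial M$, then all the $L^p(\partial M)$ mass concentrates, and the same sharp Euclidean bound gives $\mu \ge \liminf_j \int_M \rho^{1-2\sigma}|\nabla_g u_j|^2 \ge S(n,\sigma)^{-1}$, again contradicting $\mu < S(n,\sigma)^{-1}$.

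The main obstacle is the concentration-compactness step: making rigorous the assertion that any $L^p(\partial M)$-mass lost in the weak limit must concentrate at finitely many boundary points and there obeys the sharp Euclidean trace inequality \eqref{SharpWSTineRn}. This is where the hypothesis $|\nabla_g \rho| = 1$ on $\partial M$ and the flattening of the boundary in Fermi coordinates enter, ensuring the local model is exactly $(\mathbb{R}^{n+1}_+, t^{1-2\sigma})$; one typically runs a Lions-type argument on the defect measures $|\nabla_g u_j|^2 \rho^{1-2\sigma} \ud v_g$ and $|u_j|^p \ud s_g$, together with a cutoff/localization estimate near $\partial M$ that absorbs the weight. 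An alternative, and perhaps cleaner, route is to bypass concentration-compactness by directly testing the failure of \eqref{eq:maininequality} against a one-parameter family built from a minimizer-like bubble concentrated at a point of $\partial M$ — but given that the paper's later sections construct test functions precisely to show $\mu < S(n,\sigma)^{-1}$, I expect the intended proof here is the contradiction/compactness argument above, reusing Propositions \ref{pro:1} and \ref{pro:2}.
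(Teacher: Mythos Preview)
Your contradiction scheme is sound through the point where you extract a weak limit $u$, show $u\not\equiv 0$ on $\partial M$, verify $\int_{\partial M}|u|^{p-2}u\,\ud s_g=0$ (so $u\in\Lambda$), and use a Br\'ezis--Lieb splitting together with Theorem~B to rule out concentration and obtain strong convergence $u_j\to u$ in $L^p(\partial M)$.  The gap is in the final step.  From $u\in\Lambda$, $\int_{\partial M}|u|^p\,\ud s_g=1$, and weak lower semicontinuity you only obtain
\[
\mu \;\le\; \int_M\rho^{1-2\sigma}|\nabla_g u|^2\,\ud v_g \;\le\; \liminf_j \int_M\rho^{1-2\sigma}|\nabla_g u_j|^2\,\ud v_g \;\le\; \mu,
\]
i.e.\ $u$ is a \emph{minimizer} for $\mu$.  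That is not a contradiction: the inequality \eqref{eq:maininequality} failed for the \emph{sequence} $u_j$ with constants $B=j$, and its holding (with equality) for the limit $u$ is perfectly compatible with those failures.  You do not get strict inequality $\int_M\rho^{1-2\sigma}|\nabla_g u|^2<\mu$ in the limit, because you have no strong convergence of gradients and the strict bound $<\mu$ need not survive the $\liminf$.

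The paper closes this gap by working not with an arbitrary sequence violating \eqref{eq:maininequality}, but with \emph{minimizers} $u_\alpha$ of the functional $I_\alpha$ from \eqref{eq:defxi}.  These $u_\alpha$ solve an Euler--Lagrange equation \eqref{eq:thm3-1}, and integrating it over $M$ yields the identity
\[
\alpha\, C_\alpha^{\frac{4-2p}{p-1}}\int_{\partial M}|u_\alpha|^{p-2}\,\ud s_g \;=\; \xi_\alpha \;<\; S(n,\sigma)^{-1},
\]
where $C_\alpha=\bigl|\int_{\partial M}|u_\alpha|^{p-2}u_\alpha\,\ud s_g\bigr|\to 0$.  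Since the exponent $\frac{4-2p}{p-1}$ is negative, the factor $\alpha C_\alpha^{(4-2p)/(p-1)}\to\infty$, forcing $\int_{\partial M}|u_\alpha|^{p-2}\,\ud s_g\to 0$.  Compact embedding into $L^{p-2}(\partial M)$ then gives $\int_{\partial M}|u|^{p-2}\,\ud s_g=0$, hence $u\equiv 0$ on $\partial M$, contradicting $u\not\equiv 0$.  The essential idea you are missing is this use of the Euler--Lagrange equation to produce a quantitative constraint that is genuinely violated in the limit; your purely variational/compactness argument does not manufacture such a constraint.
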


\begin{remark}
By  Proposition \ref{pro:3},  to prove our main result  Theorem \ref{thm}, we only need to show that under the assumptions  about the mean curvature on the boundary $\partial M$ of  the manifold $M$ in Theorem \ref{thm},  it holds that  $\mu<S(n,\sigma)^{-1}$  and this will be done in the following Section 3.
\end{remark}

Now,   we are going to  prove Propositions \ref{pro:1}, \ref{pro:2}, and \ref{pro:3}.

\begin{proof}[Proof of Proposition \ref{pro:1}]
We begin to prove Proposition \ref{pro:1} through an argument by contradiction. Suppose the contrary of Proposition \ref{pro:1} is true, then we have, for all $\alpha\geq 1$, there exists $u_{\alpha} \in W^{1,2}(\rho^{1-2 \sigma}, M)$ such that
\be\label{eq:thm1-1}
\Big(\int_{\partial M}|u_{\alpha}|^{p} \,\ud s_{g}\Big)^{2/p} > \alpha \Big(\int_{M} \rho^{1-2 \sigma}|\nabla_{g} u_{\alpha}|^{2}  \,\ud  v_{g}+\Big|\int_{\partial M} |u_{\alpha}|^{p-2}u_{\alpha}  \,\ud s_{g}\Big|^{2/(p-1)}\Big).
\ee
By homogeneity we may assume that
\be\label{eq:thm1-2}
\int_{\partial M}|u_{\alpha}|^{p}\,\ud s_g=1.
\ee

Firstly, it follows from \eqref{eq:thm1-2} and H\"older inequality that $\{u_{\alpha}\}$ is bounded in $L^{2}(\partial M)$. Recall the definition of the norm $\|\cdot\|_{W^{1,2}(\rho^{1-2 \sigma},M)}$ given in \eqref{eq:W12norm}, $\{u_{\alpha}\}$ is bounded in $W^{1,2}(\rho^{1-2 \sigma},M)$. Thus, by choosing a subsequence if necessary, there exists $u \in W^{1,2}(\rho^{1-2 \sigma},M)$ such that as $\alpha \rightarrow \infty$,
\be\label{eq:thm1-3}
\begin{aligned}
&u_{\alpha} \rightharpoonup u \quad \text { weakly in }\, W^{1,2}(\rho^{1-2 \sigma}, M),\\
&u_{\alpha} \to u \quad \text { strongly in }\, L^{2}(\partial M).
\end{aligned}
\ee

By weakly lower semi-continuous of the norm, we have
$$
\int_{M}\rho^{1-2\sigma}|\nabla_g u|^{2}\,\ud v_g \leq \liminf _{\alpha \rightarrow \infty} \int_{M}\rho^{1-2\sigma}|\nabla_g u_{\alpha}|^{2}\,\ud v_g.
$$
On the other hand, by \eqref{eq:thm1-1} and \eqref{eq:thm1-2},
$$
\liminf _{\alpha \rightarrow \infty} \int_{M}\rho^{1-2\sigma}|\nabla_g u_{\alpha}|^{2}\,\ud v_g=0.
$$
Therefore, $u$ is constant. This together with \eqref{eq:thm1-3} implies that
$$
\lim _{\alpha \rightarrow \infty}\|u_{\alpha}\|_{W^{1,2}(\rho^{1-2 \sigma},M)}=\|u\|_{W^{1,2}(\rho^{1-2 \sigma},M)}.
$$
Furthermore, $u_{\alpha}\to u$ strongly in $W^{1,2}(\rho^{1-2 \sigma},M)$ as $\alpha \rightarrow \infty$. By Sobolev embeddings theorem, \eqref{eq:thm1-1} and \eqref{eq:thm1-2}, we obtain that
$$
\int_{\partial M}|u|^{p-2} u\,\ud s_g=0\quad \text{ and }\quad \int_{\partial M}|u|^{p}\,\ud s_g=1,
$$
which contradicts to the fact that $u$ is constant. This finishes the proof of Proposition \ref{pro:1}.
\end{proof}

\begin{proof}[Proof of Proposition \ref{pro:2}]
Choosing $u \in \Lambda$, i.e., $u\in W^{1,2}(\rho^{1-2\sigma}, M)$ satisfies
$$
\int_{\partial M}|u|^{p-2} u\,\ud s_g=0\quad \text{ and }\quad u\not\equiv 0\,\,\, \text{ on }\, \partial M.
$$
For all $\varepsilon>0$, by the definition of best constant $A_{0}$, we have
$$
\Big(\int_{\partial M}|u|^{p}\,\ud s_g\Big)^{2/p} \leq(A_{0}+\varepsilon) \int_{M}\rho^{1-2\sigma}|\nabla_g u|^{2}\,\ud v_g.
$$
Then one finds from the definition of $\mu$ that $A_{0}\geq \mu^{-1}$. It remains to prove that $A_{0}\leq \mu^{-1}$. We argue by contradiction and assume that $\mu^{-1}<A_{0}$. Choosing $S \in (\mu^{-1}, A_{0})$. For every $\alpha\geq1$, define the functional
$$
I_{\alpha}(u):=\frac{\int_{M}\rho^{1-2\sigma}|\nabla_g u|^{2}\,\ud v_g+\alpha|\int_{\partial M}|u|^{p-2} u\,\ud s_g|^{2/(p-1)}}{(\int_{\partial M}|u|^{p}\,\ud s_g)^{2/p}},
$$
where $u \in W^{1,2}(\rho^{1-2 \sigma}, M)$ and $u|_{\partial M} \not \equiv 0$. Define
\be\label{eq:defxi}
\xi_{\alpha}:=\inf _{u \in W^{1,2}(\rho^{1-2 \sigma}, M),\, u|_{\partial M} \not \equiv 0} I_{\alpha}(u).
\ee

Note that the proof of Theorem \ref{thm} implies that $\mu \leq S(n,\sigma)^{-1}$ (See Section 3).  Since $S<A_{0}$, by the definition of best constant $A_{0}$, for any $\alpha\geq 1$, there exists a function $v_{\alpha} \in W^{1,2}(\rho^{1-2 \sigma}, M)$ satisfies
$$
\Big(\int_{\partial M}|v_{\alpha}|^{p} \,\ud s_{g}\Big)^{2/p} > S \int_{M} \rho^{1-2 \sigma}|\nabla_{g} v_{\alpha}|^{2}  \,\ud  v_{g}+\alpha S \Big|\int_{\partial M} |v_{\alpha}|^{p-2}v_{\alpha}  \,\ud s_{g}\Big|^{2/(p-1)}.
$$
This implies that $\xi_{\alpha}<S^{-1}$. Since $S>\mu^{-1} \geq S(n,\sigma)$, we have $\xi_{\alpha}<S(n,\sigma)^{-1}$. By standard calculus of variations (see for example \cite[Proposition 1.2]{LiZhuSharp1997}, \cite[Proposition 2.4]{JinXiongSharp2013}), there exists a function $u_{\alpha} \in W^{1,2}(\rho^{1-2 \sigma}, M)$ satisfying
$$
\xi_{\alpha}=\int_{M}\rho^{1-2\sigma}|\nabla_g u_{\alpha}|^{2}\,\ud v_g+\alpha\Big|\int_{\partial M}|u_{\alpha}|^{p-2} u_{\alpha}\,\ud s_g\Big|^{2/(p-1)}
$$
and
\be\label{eq:thm2-1}
\int_{\partial M}|u_{\alpha}|^{p}\,\ud s_g=1.
\ee

Let
$$
C_{\alpha}:=\Big|\int_{\partial M}|u_{\alpha}|^{p-2} u_{\alpha}\,\ud s_g\Big|.
$$
We claim that $C_{\alpha} \neq 0$. Indeed, if $C_{\alpha}=0$, then $u_{\alpha}\in \Lambda$. Thus, by the definition of $\mu$, we have
$$
\xi_{\alpha}=\frac{\int_{M}\rho^{1-2\sigma}|\nabla_{g} u_{\alpha}|^{2} \,\ud v_{g}}{(\int_{\partial M}|u_{\alpha}|^{p}\,\ud s_{g})^{2/p}}\geq \inf _{u \in \Lambda} \frac{\int_{M}\rho^{1-2\sigma}|\nabla_{g} u|^{2} \,\ud v_{g}}{(\int_{\partial M}|u|^{p}\,\ud s_{g})^{2/p}}=\mu,
$$
which contradicts to the fact that $\xi_{\alpha}<S^{-1}<\mu$. Hence, $u_{\alpha}$ satisfies the Euler-Lagrange equation
$$
\begin{cases}
\operatorname{div}_{g}(\rho^{1-2 \sigma} \nabla_{g} u_{\alpha})=0 & \text{ in }\, M,\\
-\lim _{\rho \rightarrow 0^+} \rho^{1-2 \sigma} \partial_{\rho} u_{\alpha}=\xi_{\alpha}|u_{\alpha}|^{p-2}u_{\alpha}-\alpha C_{\alpha}^{\frac{3-p}{p-1}}|u_{\alpha}|^{p-2} & \text{ on }\, \partial M.
\end{cases}
$$

Similarly to the proof of Proposition \ref{pro:1}, $\{u_{\alpha}\}$ is bounded in $W^{1,2}(\rho^{1-2 \sigma},M)$. Therefore, there exists $u \in W^{1,2}(\rho^{1-2 \sigma},M)$ such that as $\alpha\to \infty$,
$$
\begin{aligned}
&u_{\alpha} \rightharpoonup u \quad \text { weakly in }\, W^{1,2}(\rho^{1-2 \sigma}, M),\\
&u_{\alpha} \to u \quad \text { strongly in }\, L^{q}(\partial M),
\end{aligned}
$$
where $q<p$. It follows from Br\'ezis-Lieb lemma that $u_{\alpha}$ and $u$ satisfy
\be\label{eq:thm2-3}
\int_{\partial M} |u_{\alpha}|^p \,\ud s_{g}-\int_{\partial M}|u_{\alpha}-u|^p \,\ud s_{g}-\int_{\partial M} |u|^p \,\ud s_{g} \rightarrow 0 \quad \text { as }\, \alpha \rightarrow \infty,
\ee
and, in view of \eqref{eq:thm2-1},
\be\label{eq:thm2-4}
\int_{\partial M}|u_{\alpha}-u|^p \,\ud s_{g} \leq 1+o(1), \quad \int_{\partial M} |u|^p \,\ud s_{g} \leq 1,
\ee
where $o(1) \rightarrow 0$ as $\alpha \rightarrow \infty$.

Since $\xi_{\alpha}=I_{\alpha}(u_{\alpha})<S^{-1}$, it follows from the definition of $I_{\alpha}$ that
\be\label{eq:thm2-5}
\Big|\int_{\partial M}|u|^{p-2} u\,\ud s_g\Big|=\lim _{\alpha \rightarrow \infty} \Big|\int_{\partial M}|u_{\alpha}|^{p-2} u_{\alpha}\,\ud s_g\Big|=0.
\ee
We claim that $u \not \equiv 0$. Indeed, by Theorem B, for any $\varepsilon>0$, there exists a constant ${A}_{\varepsilon}>0$ such that, for any $\alpha\geq 1$,
$$
1=\Big(\int_{\partial M}|u_{\alpha}|^{p}\,\ud s_g\Big)^{2/p} \leq (S(n,\sigma)+\varepsilon) \int_{M}\rho^{1-2\sigma}|\nabla_g u_{\alpha}|^{2}\,\ud v_g+{A}_{\varepsilon} \int_{\partial M} u_{\alpha}^{2}\,\ud s_g.
$$
Since $\xi_{\alpha}=I_{\alpha}(u_{\alpha})<S^{-1}$, using the definition of $I_{\alpha}$, we have
$$
\limsup _{\alpha \rightarrow \infty} \int_{M}\rho^{1-2\sigma}|\nabla_g u_{\alpha}|^{2}\,\ud v_g \leq S^{-1}.
$$
In addition, since $S>\mu^{-1} \geq S(n,\sigma)$, choosing $\varepsilon>0$ small enough, we get
$$
0<1-(S(n,\sigma)+\varepsilon) S^{-1} \leq {A}_{\varepsilon} \int_{\partial M} u^{2}\,\ud s_g,
$$
which concludes the proof of the claim.

By the compact embedding of $W^{1,2}(\rho^{1-2 \sigma}, M)$ to $L^{2}(\partial M)$, Theorem B, \eqref{eq:thm2-5}, \eqref{eq:defxi}, \eqref{eq:thm2-4}, \eqref{eq:thm2-3}, and \eqref{eq:thm2-1}, we have
$$
\begin{aligned}
\xi_{\alpha}=&\int_{M}\rho^{1-2\sigma}|\nabla_g u_{\alpha}|^{2}\,\ud v_g+\alpha\Big|\int_{\partial M}|u_{\alpha}|^{p-2} u_{\alpha}\,\ud s_g\Big|^{2/(p-1)}\\
\geq&\int_{M}\rho^{1-2\sigma}|\nabla_g (u_{\alpha}-u)|^{2}\,\ud v_g+\int_{M}\rho^{1-2\sigma}|\nabla_g u|^{2}\,\ud v_g\\
=&\int_{M}\rho^{1-2\sigma}|\nabla_g (u_{\alpha}-u)|^{2}\,\ud v_g+\frac{{A}_{\varepsilon}}{S(n,\sigma)+\varepsilon}\int_{\partial M} |u_{\alpha}-u|^2\,\ud s_g+\int_{M}\rho^{1-2\sigma}|\nabla_g u|^{2}\,\ud v_g+o(1)\\
\geq&\frac{1}{S(n,\sigma)+\varepsilon}\Big(\int_{\partial M}|u_{\alpha}-u|^{p}\,\ud s_g\Big)^{2/p}+\int_{M}\rho^{1-2\sigma}|\nabla_g u|^{2}\,\ud v_g+o(1)\\
=&\frac{1}{S(n,\sigma)+\varepsilon}\Big(\int_{\partial M}|u_{\alpha}-u|^{p}\,\ud s_g\Big)^{2/p}+\int_{M}\rho^{1-2\sigma}|\nabla_g u|^{2}\,\ud v_g+\alpha\Big|\int_{\partial M}|u|^{p-2} u\,\ud s_g\Big|^{2/(p-1)}+o(1)\\
\geq&\frac{1}{S(n,\sigma)+\varepsilon}\Big(\int_{\partial M}|u_{\alpha}-u|^{p}\,\ud s_g\Big)^{2/p}+\xi_{\alpha}\Big(\int_{\partial M}|u|^p\,\ud s_g\Big)^{2/p}+o(1)\\
\geq&\frac{1}{S(n,\sigma)+\varepsilon}\int_{\partial M}|u_{\alpha}-u|^{p}\,\ud s_g+\xi_{\alpha}\int_{\partial M}|u|^p\,\ud s_g+o(1)\\
=&\Big(\frac{1}{S(n,\sigma)+\varepsilon}-\xi_{\alpha}\Big)\int_{\partial M}|u_{\alpha}-u|^{p}\,\ud s_g+\xi_{\alpha}+o(1).
\end{aligned}
$$
Since $\xi_{\alpha}<S^{-1}<S(n,\sigma)^{-1}$, choosing $\varepsilon>0$ small enough, we can derive that $\|u_{\alpha}-u\|_{L^p(\partial M)}\to 0$ as $\alpha \rightarrow \infty$. In particular,
$$
\lim_{\alpha\to \infty}\int_{\partial M}|u_{\alpha}|^{p}\,\ud s_g=\int_{\partial M}|u|^{p}\,\ud s_g.
$$
By weakly lower semi-continuous of the norm, we have
$$
\int_{M}\rho^{1-2\sigma}|\nabla_g u|^{2}\,\ud v_g\leq \liminf _{\alpha \rightarrow \infty}\int_{M}\rho^{1-2\sigma}|\nabla_g u_{\alpha}|^{2}\,\ud v_g.
$$
Therefore,
$$
\frac{\int_{M}\rho^{1-2\sigma}|\nabla_g u|^{2}\,\ud v_g}{(\int_{\partial M}|u|^{p}\,\ud s_g)^{2/p}}\leq\liminf _{\alpha \rightarrow \infty} \frac{\int_{M}\rho^{1-2\sigma}|\nabla_g u_{\alpha}|^{2}\,\ud v_g}{(\int_{\partial M}|u_{\alpha}|^{p}\,\ud s_g)^{2/p}} \leq \liminf _{\alpha \rightarrow \infty} \xi_{\alpha} \leq S^{-1}<\mu.
$$
Together with \eqref{eq:thm2-5}, this gives a contradiction to the definition of $\mu$, and hence completes the proof of Proposition \ref{pro:2}.
\end{proof}

\begin{proof}[Proof of Proposition \ref{pro:3}]
We prove this proposition using an argument by contradiction. Suppose the contrary of Proposition \ref{pro:3} is true, then we have, for all $\alpha\geq 1$,
$$
\xi_{\alpha}<\mu,
$$
where $\xi_{\alpha}$ is defined in \eqref{eq:defxi}. Since $\mu<S(n,\sigma)^{-1}$, we have $\xi_{\alpha}<S(n,\sigma)^{-1}$. As in the proof of Proposition \ref{pro:2}, there exists $u_{\alpha}\in W^{1,2}(\rho^{1-2 \sigma},M)$ satisfying
$$
\xi_{\alpha}=I_{\alpha}(u_{\alpha})\quad\text{ and }\quad \int_{\partial M}|u_{\alpha}|^{p}\,\ud s_g=1.
$$
It follows that $u_{\alpha}$ satisfies
\be\label{eq:thm3-1}
\begin{cases}
\operatorname{div}_{g}(\rho^{1-2 \sigma} \nabla_{g} u_{\alpha})=0 & \text{ in }\, M,\\
-\lim _{\rho \rightarrow 0^+} \rho^{1-2 \sigma} \partial_{\rho} u_{\alpha}=\xi_{\alpha}|u_{\alpha}|^{p-2}u_{\alpha}-\alpha C_{\alpha}^{\frac{3-p}{p-1}}|u_{\alpha}|^{p-2} & \text{ on }\, \partial M,
\end{cases}
\ee
where
$$
C_{\alpha}=\Big|\int_{\partial M}|u_{\alpha}|^{p-2} u_{\alpha}\,\ud s_g\Big|
$$
satisfying $\lim_{\alpha\to \infty} C_{\alpha}=0$.

Again proceeding as in the proof of Proposition \ref{pro:2}, there exists $u \in W^{1,2}(\rho^{1-2\sigma}, M)$ such that as $\alpha\to \infty$,
$$
\begin{aligned}
&u_{\alpha} \rightharpoonup u \quad \text { weakly in }\, W^{1,2}(\rho^{1-2 \sigma}, M),\\
&u_{\alpha} \to u \quad \text { strongly in }\, L^{q}(\partial M),
\end{aligned}
$$
where $q<p$. Using the fact that $\xi_{\alpha}<\mu<S(n,\sigma)^{-1}$, similar to the proof of Proposition \ref{pro:2}, we conclude that $u \not \equiv 0$ on $\partial M$. Now, integrating \eqref{eq:thm3-1} over $M$ yields
$$
\alpha C_{\alpha}^{\frac{4-2p}{p-1}} \int_{\partial M}|u_{\alpha}|^{p-2}\,\ud s_g=\xi_{\alpha} < S(n,\sigma)^{-1}.
$$
Note that $\frac{4-2p}{p-1}=-\frac{8\sigma}{n+2\sigma}<0$, it follows from $\lim_{\alpha\to \infty} C_{\alpha}=0$ that $\alpha C_{\alpha}^{\frac{4-2p}{p-1}} \rightarrow \infty$ as $\alpha \rightarrow \infty$. Hence, $\int_{\partial M}|u_{\alpha}|^{p-2}\,\ud s_g \rightarrow 0$ as $\alpha \rightarrow \infty$. Since $u_{\alpha} \to u$ strongly in $L^{p-2}(\partial M)$, we get that
$$
\int_{\partial M}|u|^{p-2}\,\ud s_g=0,
$$
which is impossible since $u \not \equiv 0$ on $\partial M$. This ends the proof of Proposition \ref{pro:3}.
\end{proof}

\section{Proof of Theorem \ref{thm}}

In this section, we will prove Theorem \ref{thm}. To this end, let us recall some known facts about the extremal functions of the sharp weighted Sobolev trace inequality \eqref{SharpWSTineRn}. By definition, an extremal function $U_0\not\equiv 0$ is such that it realizes the case of equality in \eqref{SharpWSTineRn}.

Given any $\varepsilon>0$ and $x_0 \in \mathbb{R}^{n}=\partial \mathbb{R}_+^{n+1}$, define
\be\label{eq:wvarsigma}
w_{\varepsilon, x_0}(x):=\alpha_{n,\sigma}\Big(\frac{\varepsilon}{\varepsilon^{2}+|x-x_0|^{2}}\Big)^{\frac{n-2 \sigma}{2}},\quad x\in \mathbb{R}^n,
\ee
where
$$
\alpha_{n, \sigma}=2^{\frac{n-2 \sigma}{2}}\Big(\frac{\Gamma((n+2 \sigma)/2)}{\Gamma((n-2 \sigma)/2)}\Big)^{\frac{n-2 \sigma}{4 \sigma}}.
$$
For all $(x,t) \in \mathbb{R}_{+}^{n+1}$, define
\be\label{eq:Wvarsigma}
W_{\varepsilon, x_0}(x, t):=p_{n, \sigma} \int_{\mathbb{R}^{n}} \frac{t^{2 \sigma}}{(|x-y|^{2}+t^{2})^{\frac{n+2 \sigma}{2}}} w_{\varepsilon, x_0}(y) \,\ud y,
\ee
where
$$
p_{n, \sigma}=\frac{\Gamma((n+2 \sigma)/{2})}{\pi^{n/2} \Gamma(\sigma)}.
$$
We say that $W_{\varepsilon, x_0}$ is the $\sigma$-harmonic extension of $w_{\varepsilon, x_0}$. The equality of \eqref{SharpWSTineRn} is attained by $U=c W_{\varepsilon, x_0}$ for any $c \in \mathbb{R}$, $\varepsilon>0$ and $x_0 \in \mathbb{R}^{n}$. If $\sigma=1/2$, the function $W_{\varepsilon, x_0}$ can be explicitly written as
$$
W_{\varepsilon, x_0}(x, t)=\alpha_{n, \frac{1}{2}}\Big(\frac{\varepsilon}{(\varepsilon+t)^{2}+|x-x_0|^{2}}\Big)^{\frac{n-1}{2}}.
$$
Moreover, for any $\sigma\in (0,1)$, we have
\be\label{eq:wvarp=}
\Big(\int_{\mathbb{R}^{n}} w_{\varepsilon, x_0}^p \,\ud x\Big)^{\frac{2 \sigma}{n}}=S(n, \sigma)^{-1} \kappa_{\sigma},
\ee
where
$$
\kappa_{\sigma}=\frac{\Gamma(\sigma)}{2^{1-2\sigma}\Gamma(1-\sigma)}.
$$

For future use, we denote $w_{\varepsilon}=w_{\varepsilon, 0}$ and $W_{\varepsilon}=W_{\varepsilon, 0}$. It can be easily checked that $W_{\varepsilon}(\cdot, t)$ is radially symmetric for each $t>0$. Moreover,
$$
w_{\varepsilon}(x)=\varepsilon^{-(n-2 \sigma) / 2} w_{1}(\varepsilon^{-1}x) \quad \text { and } \quad W_{\varepsilon}(x, t)=\varepsilon^{-(n-2 \sigma) / 2} W_{1}(\varepsilon^{-1}x, \varepsilon^{-1}t).
$$

Given $P \in \partial M$, let $x=(x^{1}, \cdots, x^{n})$ be normal coordinates on $\partial M$ at $P$ and $t=\rho$. In other words, let $(x, t)$ be Fermi coordinates at $P$. Let us recall the expansion of the metric $g$ on $M$ near the boundary $\partial M$. Its proof can be found in Escobar \cite[Lemma 3.1 and Lemma 3.2]{EscobarConformal1992}.

\begin{lemma}\label{lem:Taylor}
Suppose that $P\in \partial M$. Then for $(x,t)$ small, it holds that
\be\label{eq:detg}
\sqrt{|g|}=1-Ht+\frac{1}{2}(H^{2}-\|\pi\|^{2}-R_{tt}) t^{2}-H_{, i} x^{i}t-\frac{1}{6} \bar{R}_{i j} x^{i} x^{j}+O(|(x,t)|^{3}),
\ee
and
\be\label{eq:gij}
g^{i j} =\delta^{i j}+2 \pi^{i j} t-\frac{1}{3} \bar{R}^{i}{ }_{k l}{}^{j} x^{k} x^{l}+g^{i j}{}_{, t m}  x^{m}t +(3 \pi^{i m} \pi_{m}{ }^{j}+R^{i}{ }_{t}{}^j{}_{t}) t^{2}+O(|(x,t)|^{3}),
\ee
where $H$ is the mean curvature of $\partial M$, $\pi^{i j}$ are the components of the second fundamental form
$\pi$ of $\partial M$ and $\|\pi\|^{2}=g^{i k} g^{j l} \pi_{i j} \pi_{k l}$, $\bar{R}^{i}{ }_{k l}{}^{j}$ are the components of the induced Riemannian curvature tensor on
$\partial M$, $R^{i}{ }_{t}{}^j{}_{t}$ is that of the Riemannian curvature tensor in $M$, $\bar{R}_{i j}$ are the components of the
induced Ricci curvature tensor on $\partial M$, $R_{tt}$ is that of the Ricci curvature tensor in $M$, and $H_{, i}$, $g^{i j}{}_{, t m}$
denote the derivative of the mean curvature and metric tensor, respectively. Every tensor in the expansions is computed at $P$.
\end{lemma}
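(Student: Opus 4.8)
\textbf{Proof strategy for Lemma \ref{lem:Taylor}.}

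The plan is to derive the two expansions \eqref{eq:detg} and \eqref{eq:gij} by working in Fermi coordinates $(x,t)=(x^1,\dots,x^n,t)$ based at $P$, where $t=\rho$ measures the $g$-distance to $\partial M$ and $x$ are $\bar g$-normal coordinates on $\partial M$ at $P$. The starting point is the standard structural fact that in these coordinates the metric splits as $g = dt^2 + g_{ij}(x,t)\,dx^i dx^j$, i.e. $g_{tt}\equiv 1$ and $g_{ti}\equiv 0$, because the $t$-lines are unit-speed geodesics hitting $\partial M$ orthogonally. Thus the whole problem reduces to Taylor expanding the one-parameter family of boundary metrics $g_{ij}(x,t)$ in $(x,t)$ up to order $2$ (with $O(|(x,t)|^3)$ error) and then reading off $\sqrt{|g|}=\sqrt{\det(g_{ij})}$.

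The key steps, in order, are as follows. First, expand in the $t$-direction at fixed $x$ using the Riccati/Jacobi equation for the shape operator of the level sets $\{t=\text{const}\}$: one has $\partial_t g_{ij} = 2 (g_{ik}) A^k{}_j$ where $A$ is the second fundamental form operator of the level set, and $A$ satisfies $\partial_t A + A^2 = -R(\cdot,\partial_t,\partial_t,\cdot)$ (the radial curvature equation). Evaluating at $t=0$ gives $\partial_t g_{ij}|_{t=0} = 2\pi_{ij}$ and, differentiating once more and using the Riccati equation, $\partial_t^2 g_{ij}|_{t=0} = 2(\pi^2)_{ij}\cdot\text{(factor)} - 2R_{itjt} + \dots$; careful bookkeeping of the signs and of the index contractions yields the $t$-coefficients appearing in \eqref{eq:gij}, namely $2\pi^{ij}$ at order $t$ and $3\pi^{im}\pi_m{}^j + R^i{}_t{}^j{}_t$ at order $t^2$ for $g^{ij}$ (note the statement is for the inverse metric, so one must also invert the expansion of $g_{ij}$, using $g^{ij}=\delta^{ij}-h_{ij}+ (h^2)_{ij}+\dots$ where $h_{ij}=g_{ij}-\delta_{ij}$). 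Second, expand in the $x$-directions at $t=0$: since $x$ are $\bar g$-normal coordinates on $\partial M$ at $P$, the classical normal-coordinate expansion gives $\bar g_{ij}(x) = \delta_{ij} - \tfrac13 \bar R_{iklj} x^k x^l + O(|x|^3)$, equivalently $\bar g^{ij}(x)=\delta^{ij}+\tfrac13 \bar R^i{}_{kl}{}^j x^k x^l+O(|x|^3)$, and $\sqrt{\det \bar g}(x) = 1 - \tfrac16 \bar R_{ij} x^i x^j + O(|x|^3)$. Third, handle the mixed $xt$-terms of order $|(x,t)|^2$: differentiating $\partial_t g_{ij}|_{t=0}=2\pi_{ij}(x)$ in $x$ produces the term $g^{ij}{}_{,tm}x^m t$ (a term whose coefficient is not given a closed form, reflecting that it is a first $x$-derivative of the second fundamental form), and for $\sqrt{|g|}$ the corresponding mixed term is $-H_{,i}x^i t$ since $\partial_t\log\sqrt{|g|}|_{t=0} = \tfrac12 g^{ij}\partial_t g_{ij}|_{t=0} = H(x)$ and $H(x)=H+H_{,i}x^i+O(|x|^2)$. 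Finally, combine: use $\sqrt{|g|} = \sqrt{\det(g_{ij}(x,t))}$ and the identity $\partial_t^2 \log\sqrt{|g|}|_{t=0} = \tfrac12\partial_t(g^{ij}\partial_t g_{ij})|_{t=0} = \tfrac12 g^{ij}\partial_t^2 g_{ij}|_{t=0} - \tfrac12 g^{ik}g^{jl}\partial_t g_{ij}\partial_t g_{kl}|_{t=0}$, which after substituting the Riccati data evaluates to $H^2 - \|\pi\|^2 - R_{tt}$ (here $R_{tt}=R_{it}{}^i{}_t = -\delta^{ij}R_{itjt}$ is the radial Ricci component and $\|\pi\|^2=\pi_{ij}\pi^{ij}$); assembling the $t^2$, $xt$ and $x^ix^j$ contributions of $\sqrt{|g|}$ gives \eqref{eq:detg}. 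For \eqref{eq:gij} one multiplies the inverse-of-$t$-expansion by the inverse-of-$x$-expansion $\delta^{ij}+\tfrac13\bar R^i{}_{kl}{}^j x^k x^l$ — noting the sign convention $-\tfrac13\bar R^i{}_{kl}{}^j x^k x^l$ in \eqref{eq:gij} is the author's index ordering of that same term — and keeps all terms through order $|(x,t)|^2$.

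The main obstacle is purely a matter of sign and contraction conventions rather than of substantive difficulty: one must fix once and for all the sign convention for the Riemann and Ricci tensors, the sign convention for the second fundamental form $\pi$ (so that $\partial_t g_{ij}|_{t=0}=+2\pi_{ij}$ and the mean curvature $H=g^{ij}\pi_{ij}$ is the trace with the correct sign for the geometry at hand), and then propagate these consistently through the Riccati equation, through the inversion $g_{ij}\mapsto g^{ij}$, and through $\det\mapsto\sqrt{\det}$, so that the coefficients match \eqref{eq:detg}–\eqref{eq:gij} exactly. Since this is a known result of Escobar, the cleanest route is to cite \cite[Lemma 3.1 and Lemma 3.2]{EscobarConformal1992} for the precise statement and to indicate that the derivation is the Riccati-equation computation sketched above; I would present the proof at the level of "expand $g_{ij}(x,t)$ using the radial curvature equation and the $\bar g$-normal-coordinate expansion, then take determinants," leaving the bookkeeping of constants to the cited reference.
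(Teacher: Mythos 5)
Your proposal ultimately lands on the same route the paper takes: citing Escobar \cite[Lemma 3.1 and Lemma 3.2]{EscobarConformal1992} for these Fermi-coordinate expansions, which is exactly how the paper disposes of Lemma \ref{lem:Taylor}. The Riccati-equation and normal-coordinate sketch you include is a correct and useful gloss on how Escobar's computation actually goes, but it is supplementary to, not divergent from, what the authors do.
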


Let $\delta>0$ be a fixed small number, define $B_{\delta}$ and $\mathcal{B}^+_{\delta}$ be the $n$-dimensional ball and the $(n+1)$-dimensional upper half-ball centered at 0 whose radius is $\delta$, respectively. Let $\eta \in C_{c}^{\infty}(\mathbb{R}_{+}^{n+1})$ be a smooth radial cut-off function such that $0\leq \eta\leq 1$, $\eta\equiv1$ in $\mathcal{B}^+_{\delta}$ and $\eta\equiv 0$ in $\mathbb{R}_{+}^{n+1} \backslash \mathcal{B}^+_{2\delta}$. In the following, we will abuse notations by denoting $B_{\delta}$ and $\mathcal{B}_{\delta}$ as the geodesic ball  centered at $P$ with radius $\delta$.

Now, we define a family of test functions
\be\label{eq:defphivar}
\phi_{\varepsilon}=\eta W_{\varepsilon}-\mu_{\varepsilon}
=
\begin{cases}
W_{\varepsilon}-\mu_{\varepsilon} & \text{ in }\,  M\cap \mathcal{B}^+_{\delta},\\
\eta W_{\varepsilon}-\mu_{\varepsilon} & \text{ in }\,  M\cap (\mathcal{B}^+_{2\delta}\backslash \mathcal{B}^+_{\delta}) ,\\
-\mu_{\varepsilon} & \text{ in }\,  M\backslash \mathcal{B}^+_{2\delta},
\end{cases}
\ee
where $\mu_{\varepsilon}>0$ is chosen such that
\be\label{eq:varep=0}
\int_{\partial M}|\phi_{\varepsilon}|^{p-2} \phi_{\varepsilon}\,\ud s_g=0,
\ee
therefore, $\phi_{\varepsilon} \in \Lambda$.

In the remainder of this section, we will prove that, under the assumptions of Theorem \ref{thm}, for $\varepsilon>0$ small enough,
$$
\frac{\int_{M}\rho^{1-2\sigma}|\nabla_g \phi_{\varepsilon}|^{2}\,\ud v_g}{(\int_{\partial M}|\phi_{\varepsilon}|^{p}\,\ud s_g)^{2/p}}<S(n,\sigma)^{-1},
$$
thus by Proposition \ref{pro:3}, Theorem \ref{thm} is proved.

We first estimate $\mu_{\varepsilon}$. Thanks to the definition of $\phi_{\varepsilon}$, we have that
\be\label{eq:varepBc}
\int_{\partial M \backslash B_{2\delta}}|\phi_{\varepsilon}|^{p-2} \phi_{\varepsilon}\,\ud s_g\sim-\mu_{\varepsilon}^{\frac{n+2\sigma}{n-2\sigma}}.
\ee
On the other hand, we get from \eqref{eq:detg} that, when $\varepsilon \rightarrow 0$,
\be\label{eq:varepB}
\begin{aligned}
&\int_{\partial M \cap B_{2\delta}}|\phi_{\varepsilon}|^{p-2} \phi_{\varepsilon}\,\ud s_g\\
=& \int_{0}^{2\delta} \int_{S(r)}\Big|\eta(r)\alpha_{n,\sigma}\Big(\frac{\varepsilon}{\varepsilon^{2}+r^{2}}\Big)^{\frac{n-2\sigma}{2}}-\mu_{\varepsilon}\Big|^{\frac{4\sigma}{n-2\sigma}}
\Big(\eta(r)\alpha_{n,\sigma}\Big(\frac{\varepsilon}{\varepsilon^{2}+r^{2}}\Big)^{\frac{n-2\sigma}{2}}-\mu_{\varepsilon}\Big) \sqrt{|g|} \,\ud \sigma \,\ud r\\
\sim& \int_{0}^{2\delta} \int_{S(r)}\Big|\eta(r)\alpha_{n,\sigma}\Big(\frac{\varepsilon}{\varepsilon^{2}+r^{2}}\Big)^{\frac{n-2\sigma}{2}}-\mu_{\varepsilon}\Big|^{\frac{4\sigma}{n-2\sigma}}
\Big(\eta(r)\alpha_{n,\sigma}\Big(\frac{\varepsilon}{\varepsilon^{2}+r^{2}}\Big)^{\frac{n-2\sigma}{2}}-\mu_{\varepsilon}\Big) \,\ud \sigma \,\ud r\\
\sim&  \int_{0}^{2\delta} \Big|\eta(r)\alpha_{n,\sigma}\Big(\frac{\varepsilon}{\varepsilon^{2}+r^{2}}\Big)^{\frac{n-2\sigma}{2}}-\mu_{\varepsilon}\Big|^{\frac{4\sigma}{n-2\sigma}}
\Big(\eta(r)\alpha_{n,\sigma}\Big(\frac{\varepsilon}{\varepsilon^{2}+r^{2}}\Big)^{\frac{n-2\sigma}{2}}-\mu_{\varepsilon}\Big)r^{n-1}  \,\ud r\\
\sim& \varepsilon^{-\sigma+\frac{n}{2}}
\int_{0}^{{2\delta}/{\varepsilon}}\Big|\eta(\varepsilon s)\alpha_{n,\sigma}\Big(\frac{1}{1+s^{2}}\Big)^{\frac{n-2\sigma}{2}}-\varepsilon^{\frac{n-2\sigma}{2}} \mu_{\varepsilon}\Big|^{\frac{4\sigma}{n-2\sigma}}\Big(\eta(\varepsilon s)\alpha_{n,\sigma}\Big(\frac{1}{1+s^{2}}\Big)^{\frac{n-2\sigma}{2}}-\varepsilon^{\frac{n-2\sigma}{2}} \mu_{\varepsilon}\Big) s^{n-1} \,\ud s,
\end{aligned}
\ee
where $\ud \sigma$ is the volume element of $S(r)=\{Q \in \partial M \mid \operatorname{dist}_{g}(Q, P)=r\}$. Choosing a
subsequence if necessary, we can suppose that $\varepsilon^{\frac{n-2\sigma}{2}} \mu_{\varepsilon}\to 0$ as $\varepsilon\to 0$.

Combining \eqref{eq:varep=0}, \eqref{eq:varepBc}, and \eqref{eq:varepB}, we obtain
\be\label{eq:muvarsim}
\mu_{\varepsilon} \sim \varepsilon^{\frac{(n-2\sigma)^{2}}{2 (n+2\sigma)}}
\ee
as $\varepsilon\to 0$.

In order to prove Theorem \ref{thm}, we distinguish two cases.

{\bf Case 1.} $n=3$, $\sigma\in (0,1/2)$, $H(P) > 0$ or $n \geq 4$, $\sigma\in (0,1)$, $H(P)>0$.

{\bf Step 1:} Computation of $(\int_{\partial M} |\phi_{\varepsilon}|^p\,\ud s_g)^{-2/p}$.

By the definition of $\phi_{\varepsilon}$, we have that
$$
\begin{aligned}
&\int_{\partial M} |\phi_{\varepsilon}|^{p}\,\ud s_g\\
=&\int_{\partial M \backslash B_{2\delta}} |\phi_{\varepsilon}|^{p}\,\ud s_g+\int_{\partial M \cap B_{2\delta}} |\phi_{\varepsilon}|^{p}\,\ud s_g\\
=&\mu_{\varepsilon}^{p} \operatorname{Vol}(\partial M \backslash B_{2\delta})+\int_{0}^{2\delta} \int_{S(r)}\Big|\eta(r)\alpha_{n,\sigma}\Big(\frac{\varepsilon}{\varepsilon^{2}+r^{2}}\Big)^{\frac{n-2\sigma}{2}}-\mu_{\varepsilon}\Big|^{\frac{2n}{n-2\sigma}} \sqrt{|g|} \,\ud \sigma \,\ud r.
\end{aligned}
$$
It follows from \eqref{eq:detg} that
$$
\sqrt{|g|}(x,0)=1-\frac{\bar{R}_{i j}(P)}{6}  x^{i} x^{j}+O(|x|^{3}).
$$
Therefore,
$$
\int_{S(r)} \sqrt{|g|} \,\ud \sigma=\omega_{n-1} r^{n-1}\Big(1-\frac{\bar{R}(P)}{6n} r^{2}+o(r^{2})\Big),
$$
where $\bar{R}$ is the scalar curvature of $\partial M$. Hence,
\be\label{eq:parMexpansion}
\begin{aligned}
&\int_{\partial M} |\phi_{\varepsilon}|^{p}\,\ud s_g\\
=&\mu_{\varepsilon}^{p} \operatorname{Vol}(\partial M \backslash B_{2\delta})+\int_{0}^{2\delta} \int_{S(r)}\Big|\eta(r)\alpha_{n,\sigma}\Big(\frac{\varepsilon}{\varepsilon^{2}+r^{2}}\Big)^{\frac{n-2\sigma}{2}}-\mu_{\varepsilon}\Big|^{\frac{2n}{n-2\sigma}} \sqrt{|g|} \,\ud \sigma \,\ud r\\
=&\mu_{\varepsilon}^{p} \operatorname{Vol}(\partial M \backslash B_{2\delta}) \\
&+\omega_{n-1} \int_{0}^{{2\delta}}\Big|\eta(r)\alpha_{n,\sigma}\Big(\frac{\varepsilon}{\varepsilon^{2}+r^{2}}\Big)^{\frac{n-2\sigma}{2}}-\mu_{\varepsilon}\Big|
^{\frac{2n}{n-2\sigma}} r^{n-1}
\Big(1-\frac{\bar{R}(P)}{6n}r^{2}+o(r^{2})\Big) \,\ud r \\
=&\mu_{\varepsilon}^{p} \operatorname{Vol}(\partial M \backslash B_{2\delta}) \\
&+\omega_{n-1} \int_{0}^{{2\delta}/{\varepsilon}}\Big|\eta(\varepsilon s)\alpha_{n,\sigma}\Big(\frac{1}{1+s^{2}}\Big)^{\frac{n-2\sigma}{2}}-\varepsilon^{\frac{n-2\sigma}{2}} \mu_{\varepsilon}\Big|^{\frac{2n}{n-2\sigma}} s^{n-1}
\Big(1-\frac{\bar{R}(P)}{6n}\varepsilon^{2} s^{2}+\varepsilon^{2} o(s^{2})\Big) \,\ud s \\
=&\mu_{\varepsilon}^{p} \operatorname{Vol}(\partial M \backslash B_{2\delta}) +\omega_{n-1} \alpha_{n,\sigma}^{{2n}/(n-2\sigma)} \int_{0}^{\infty}\frac{s^{n-1}}{(1+s^{2})^{n}}\Big(1-\frac{\bar{R}(P)}{6n}\varepsilon^{2} s^{2}+\varepsilon^{2} o(s^{2})\Big) \,\ud s+{O(\varepsilon^{n})}.
\end{aligned}
\ee
By the change of variables $s=\varepsilon^{-1} r$, \eqref{eq:wvarsigma} and \eqref{eq:wvarp=}, we have
\be\label{eq:intparts1}
\begin{aligned}
\omega_{n-1}\alpha_{n,\sigma}^{{2n}/(n-2\sigma)} \int_{0}^{\infty} \frac{s^{n-1}}{(1+s^{2})^{n}}\,\ud s=&\omega_{n-1}\alpha_{n,\sigma}^{{2n}/(n-2\sigma)}\int_{0}^{\infty} \frac{\varepsilon^n}{(\varepsilon^2+r^{2})^{n}}r^{n-1}\,\ud r \\
=&\alpha_{n,\sigma}^{{2n}/(n-2\sigma)}\int_{\mathbb{R}^{n}} \Big(\frac{\varepsilon}{\varepsilon^2+|x|^2}\Big)^n\,\ud x\\
=&\int_{\mathbb{R}^{n}} w_{\varepsilon}^{p}\,\ud x\\
=&({S(n,\sigma)^{-1}}\kappa_{\sigma})^{n/2\sigma}.
\end{aligned}
\ee
Using integration by parts, we have
$$
\begin{aligned}
\int_{0}^{\infty} \frac{s^{n+1}}{(1+s^{2})^{n}}\, \ud s=&\int_{0}^{\infty} \frac{(1+s^2)s^{n-1}-s^{n-1}}{(1+s^{2})^{n}}\, \ud s\\
=&\int_{0}^{\infty} \frac{s^{n-1}}{(1+s^{2})^{n-1}}\, \ud s-\int_{0}^{\infty} \frac{s^{n-1}}{(1+s^{2})^{n}}\, \ud s\\
=&\frac{2(n-1)}{n}\int_{0}^{\infty} \frac{s^{n+1}}{(1+s^{2})^{n}}\, \ud s-\int_{0}^{\infty} \frac{s^{n-1}}{(1+s^{2})^{n}}\, \ud s,
\end{aligned}
$$
therefore,
\be\label{eq:intparts2}
\begin{aligned}
\int_{0}^{\infty} \frac{s^{n+1}}{(1+s^{2})^{n}}\, \ud s=& \frac{n}{n-2} \int_{0}^{\infty} \frac{s^{n-1}}{(1+s^{2})^{n}} \,\ud s\\
=&\frac{n}{(n-2)\omega_{n-1}\alpha_{n,\sigma}^{{2n}/(n-2\sigma)}}({S(n,\sigma)^{-1}}\kappa_{\sigma})^{n/2\sigma}.
\end{aligned}
\ee

Now inserting \eqref{eq:muvarsim}, \eqref{eq:intparts1}, \eqref{eq:intparts2} into \eqref{eq:parMexpansion}, we obtain
$$
\begin{aligned}
&\int_{\partial M} |\phi_{\varepsilon}|^{p}\,\ud s_g\\
=&\mu_{\varepsilon}^{p} \operatorname{Vol}(\partial M \backslash B_{2\delta}) +\omega_{n-1} \alpha_{n,\sigma}^{{2n}/(n-2\sigma)} \int_{0}^{\infty}\frac{s^{n-1}}{(1+s^{2})^{n}}\Big(1-\frac{\bar{R}(P)}{6n}\varepsilon^{2} s^{2}+\varepsilon^{2} o(s^{2})\Big) \,\ud s+O(\varepsilon^{n})\\
=&O(\varepsilon^{\frac{n(n-2\sigma)}{n+2\sigma}}) +({S(n,\sigma)^{-1}}\kappa_{\sigma})^{n/2\sigma}-\frac{\bar{R}(P)}{6(n-2)}({S(n,\sigma)^{-1}}\kappa_{\sigma})^{n/2\sigma}\varepsilon^{2}
+o(\varepsilon^{2})+O(\varepsilon^{n})\\
=&({S(n,\sigma)^{-1}}\kappa_{\sigma})^{n/2\sigma}\Big(1-\frac{\bar{R}(P)}{6(n-2)} \varepsilon^{2}+o(\varepsilon^{2})+O(\varepsilon^n)+O(\varepsilon^{\frac{n(n-2\sigma)}{n+2\sigma}})\Big).
\end{aligned}
$$
If $n=3$, we have
$$
\int_{\partial M} |\phi_{\varepsilon}|^{p}\,\ud s_g=
\begin{cases}
({S(n,\sigma)^{-1}}\kappa_{\sigma})^{n/2\sigma}(1-\frac{\bar{R}(P)}{6(n-2)} \varepsilon^{2}+o(\varepsilon^{2})) &\text{ if }\, 0<\sigma<3/10,\\
({S(n,\sigma)^{-1}}\kappa_{\sigma})^{n/2\sigma}(1+O(\varepsilon^{\frac{3(3-2\sigma)}{3+2\sigma}})) &\text{ if }\, 3/10\leq \sigma<1.
\end{cases}
$$
Hence,
\be\label{eq:step1n=3}
\begin{aligned}
&\Big(\int_{\partial M} |\phi_{\varepsilon}|^{p}\,\ud s_g\Big)^{-2/p}\\
=&
\begin{cases}
({S(n,\sigma)^{-1}}\kappa_{\sigma})^{-(n-2\sigma)/2\sigma}(1+\frac{(n-2\sigma)\bar{R}(P)}{6n(n-2)}  \varepsilon^{2}+o(\varepsilon^{2})) &\text{ if }\, 0<\sigma<3/10,\\
({S(n,\sigma)^{-1}}\kappa_{\sigma})^{-(n-2\sigma)/2\sigma}(1+O(\varepsilon^{\frac{3(3-2\sigma)}{3+2\sigma}})) &\text{ if }\, 3/10\leq \sigma<1.
\end{cases}
\end{aligned}
\ee
Similarly, we have, if $n=4$,
\be\label{eq:step1n=4}
\begin{aligned}
&\Big(\int_{\partial M} |\phi_{\varepsilon}|^{p}\,\ud s_g\Big)^{-2/p}\\
=&
\begin{cases}
({S(n,\sigma)^{-1}}\kappa_{\sigma})^{-(n-2\sigma)/2\sigma}(1+\frac{(n-2\sigma)\bar{R}(P)}{6n(n-2)}  \varepsilon^{2}+o(\varepsilon^{2})) &\text{ if }\, 0<\sigma<2/3,\\
({S(n,\sigma)^{-1}}\kappa_{\sigma})^{-(n-2\sigma)/2\sigma}(1+O(\varepsilon^{\frac{4(2-\sigma)}{2+\sigma}})) &\text{ if }\, 2/3\leq \sigma<1.
\end{cases}
\end{aligned}
\ee
If $n\geq5$,
\be\label{eq:step1n>=5}
\Big(\int_{\partial M} |\phi_{\varepsilon}|^{p}\,\ud s_g\Big)^{-2/p}=({S(n,\sigma)^{-1}}\kappa_{\sigma})^{-\frac{n-2\sigma}{2\sigma}}\Big(1+\frac{(n-2\sigma)\bar{R}(P)}{6n(n-2)} \varepsilon^{2}+o(\varepsilon^{2})\Big).
\ee

By the definition \eqref{eq:defphivar} of the function $\phi_{\varepsilon}$,
\be\label{eq:I1+I2}
\begin{aligned}
\int_{M}\rho^{1-2\sigma}|\nabla_g \phi_{\varepsilon}|^{2}\,\ud v_g&=\int_{\mathcal{B}_{2\delta}^{+}}\rho^{1-2\sigma}|\nabla_g \phi_{\varepsilon}|^{2}\,\ud v_g\\
&= \int_{\mathcal{B}_{\delta}^{+}}t^{1-2\sigma}|\nabla_g W_{\varepsilon}|^{2}\,\ud v_g+\int_{\mathcal{B}_{2\delta}^{+}\backslash \mathcal{B}_{\delta}^{+}}t^{1-2\sigma}|\nabla_g \phi_{\varepsilon}|^{2}\,\ud v_g\\
&=:I_1+I_2.
\end{aligned}
\ee
For the rest of this paper, we set
$$
|\nabla U|^2=(\partial_{x_1}U)^2+\cdots+(\partial_{x_n}U)^2+(\partial_t U)^2, \quad |\nabla_x U|^2=(\partial_{x_1}U)^2+\cdots+(\partial_{x_n}U)^2.
$$

{\bf Step 2:} Computation of $I_1$.

Before starting the computation, let us make one useful observation. For any $k \in \mathbb{N}$, we get from Lemma \ref{lem:appendix1-1} that
\be\label{eq:important}
\int_{\mathcal{B}_{\delta}^{+}} t^{1-2 \sigma}(|(x,t)|^{k}|\nabla W_{\varepsilon}|^{2}) \,\ud x\,\ud t\\
=
\begin{cases}
O(\varepsilon^{k}) & \text { if }\, n>2 \sigma+k, \\
O(\varepsilon^{k} \log (\delta / \varepsilon)) & \text { if }\, n=2 \sigma+k, \\
O(\varepsilon^{k}(\delta / \varepsilon)^{2 \sigma+k-n}) & \text { if }\, n<2 \sigma+k.
\end{cases}
\ee

Using \eqref{eq:gij}, we get
$$
\begin{aligned}
I_1=&\int_{\mathcal{B}_{\delta}^{+}}t^{1-2\sigma}|\nabla_g W_{\varepsilon}|^{2}\,\ud v_g\\
=&\int_{\mathcal{B}_{\delta}^{+}} t^{1-2\sigma}({g}^{i j}  \partial_i W_{\varepsilon}(x, t)  \partial_j W_{\varepsilon}(x, t)+(\partial_{t} W_{\varepsilon}(x, t))^{2}) \,\ud v_g\\
=&\int_{\mathcal{B}_{\delta}^{+}} t^{1-2\sigma}|\nabla W_{\varepsilon}|^2 \,\ud v_g+2 \pi^{i j}(P) \int_{\mathcal{B}_{\delta}^{+}} t^{2-2\sigma} \partial_i W_{\varepsilon} \partial_j W_{\varepsilon} \,\ud v_{g}\\
&+\int_{\mathcal{B}_{\delta}^{+}}t^{1-2\sigma} O(|(x,t)|^{2})|\nabla_x W_{\varepsilon}|^{2}\,\ud  v_{g}.
\end{aligned}
$$
By \eqref{eq:detg}, we have
\be\label{eq:H>0-1}
\begin{aligned}
I_1= &\int_{\mathcal{B}_{\delta}^{+}} t^{1-2\sigma}|\nabla W_{\varepsilon}|^2\,\ud x \,\ud t-H(P) \int_{\mathcal{B}_{\delta}^{+}} t^{2-2\sigma}|\nabla W_{\varepsilon}|^2\,\ud x \,\ud t\\
&+2 \pi^{i j}(P) \int_{\mathcal{B}_{\delta}^{+}} t^{2-2\sigma} \partial_i W_{\varepsilon} \partial_j W_{\varepsilon} \,\ud x\,\ud t+\int_{\mathcal{B}_{\delta}^{+}} t^{1-2\sigma} O(|(x,t)|^2) |\nabla W_{\varepsilon}|^2 \,\ud x\,\ud t.
\end{aligned}
\ee
Since $W_{\varepsilon}$ is the extremal function of \eqref{SharpWSTineRn}, it follows from \eqref{eq:wvarp=} that
\be\label{eq:H>0-2}
\begin{aligned}
\int_{\mathcal{B}_{\delta}^{+}} t^{1-2\sigma}|\nabla W_{\varepsilon}|^2\,\ud x \,\ud t\leq&\int_{\mathbb{R}_{+}^{n+1}} t^{1-2 \sigma}|\nabla W_{\varepsilon}|^{2} \,\ud x \,\ud t\\
=&S(n,\sigma)^{-1}\Big(\int_{\mathbb{R}^{n}} w_{\varepsilon}^{\frac{2 n}{n-2 \sigma}} \,\ud x\Big)^{\frac{n-2\sigma}{n}}\\
=&\kappa_{\sigma}^{\frac{n-2\sigma}{2\sigma}}S(n,\sigma)^{-\frac{n}{2\sigma}}.
\end{aligned}
\ee
On the other hand, since $\partial_i W_{\varepsilon}$ is odd in $x_{i}$ and $\pi^{i j} \delta_{i j}=H$ at the point $P$, using \eqref{eq:important} and the positivity of $H(P)$, it holds that
\be\label{eq:H>0-3}
\begin{aligned}
&-H(P) \int_{\mathcal{B}_{\delta}^{+}} t^{2-2\sigma}|\nabla W_{\varepsilon}|^2\,\ud x \,\ud t+2 \pi^{i j}(P) \int_{\mathcal{B}_{\delta}^{+}} t^{2-2\sigma} \partial_i W_{\varepsilon} \partial_j W_{\varepsilon} \,\ud x\,\ud t\\
=&-H(P) \int_{\mathcal{B}_{\delta}^{+}} t^{2-2\sigma}|\nabla W_{\varepsilon}|^2\,\ud x \,\ud t+\frac{2}{n}H(P)\int_{\mathcal{B}_{\delta}^{+}} t^{2-2\sigma}|\nabla_x W_{\varepsilon}|^2 \,\ud x\,\ud t\\
\leq&-H(P) \int_{\mathcal{B}_{\delta}^{+}} t^{2-2\sigma}|\nabla W_{\varepsilon}|^2\,\ud x \,\ud t+\frac{2}{n}H(P)\int_{\mathcal{B}_{\delta}^{+}} t^{2-2\sigma}|\nabla W_{\varepsilon}|^2 \,\ud x\,\ud t\\
\sim&\frac{2-n}{n}H(P)\varepsilon\\
\leq&-CH(P)\varepsilon,
\end{aligned}
\ee
where $C>0$. Applying \eqref{eq:important} again, we get
\be\label{eq:H>0-4}
\int_{\mathcal{B}_{\delta}^{+}} t^{1-2\sigma} O(|(x,t)|^2) |\nabla W_{\varepsilon}|^2 \,\ud x\,\ud t=O(\varepsilon^{2}),
\ee
if $n>2\sigma+2$.

Inserting \eqref{eq:H>0-2}, \eqref{eq:H>0-3} and \eqref{eq:H>0-4} into \eqref{eq:H>0-1}, we obtain
\be\label{eq:case1I1}
I_{1} \leq \kappa_{\sigma}^{\frac{n-2\sigma}{2\sigma}}S(n,\sigma)^{-\frac{n}{2\sigma}}-CH(P)\varepsilon+O(\varepsilon^{2}),
\ee
where $C>0$.

{\bf Step 3:} Computation of $I_2$.

Note that
$$
|\nabla_{g} \phi_{\varepsilon}|^{2} \leq C|\nabla \phi_{\varepsilon}|^{2} \leq C(\eta^{2}|\nabla W_{\varepsilon}|^{2}+W_{\varepsilon}^{2}|\nabla \eta|^{2}),
$$
so that, because of the structure of the cut-off function $\eta$,
\be\label{eq:case1step3-1}
|\nabla_{g} \phi_{\varepsilon}|^{2} \leq C|\nabla W_{\varepsilon}|^{2}+\frac{C}{\delta^2}W_{\varepsilon}^{2}.
\ee
Moreover, using the fact that $W_{\varepsilon}(\varepsilon x, \varepsilon t)=\varepsilon^{-(n-2 \sigma) / 2} W_{1}(x, t)$, we have, if $n>2\sigma+2$,
\be\label{eq:case1step3-2}
\int_{\mathcal{B}_{2 \delta}^{+} \backslash \mathcal{B}_{\delta}^{+}} t^{1-2\sigma}W_{\varepsilon}^{2} \,\ud x \,\ud t = \varepsilon^{2} \int_{\mathcal{B}_{2 \delta / \varepsilon}^{+} \backslash \mathcal{B}_{\delta / \varepsilon}^{+}} t^{1-2\sigma}W_{1}^{2}\,\ud x \,\ud t=\varepsilon^{2} o(1),
\ee
because by Lemma \ref{lem:appendix1-1}, the integral $\int_{\mathbb{R}_+^{n+1}} t^{1-2\sigma}W_{1}^{2} \,\ud x \,\ud t$ is finite and $\delta / \varepsilon \rightarrow \infty$ as $\varepsilon\to 0$.

On the other hand, if $n>2\sigma+2$, then using Lemma \ref{lem:appendix1-1} we have
$$
\Big(\frac{\delta}{\varepsilon}\Big)^{2} \int_{\mathcal{B}_{2 \delta / \varepsilon}^{+} \backslash \mathcal{B}_{\delta / \varepsilon}^{+}} t^{1-2\sigma}|\nabla W_{1}|^{2} \,\ud x \,\ud t \leq \int_{\mathcal{B}_{2 \delta / \varepsilon}^{+} \backslash \mathcal{B}_{\delta / \varepsilon}^{+}} t^{1-2\sigma}|(x, t)|^2|\nabla W_{1}|^{2} \,\ud x \,\ud t<\infty.
$$
Hence,
\be\label{eq:case1step3-3}
\int_{\mathcal{B}_{2 \delta}^{+} \backslash \mathcal{B}_{\delta}^{+}} t^{1-2\sigma}|\nabla W_{\varepsilon}|^{2} \,\ud x \,\ud t =\int_{\mathcal{B}_{2 \delta/\varepsilon}^{+} \backslash \mathcal{B}_{\delta/\varepsilon}^{+}} t^{1-2\sigma}|\nabla W_{1}|^{2} \,\ud x \,\ud t= O(\varepsilon^2).
\ee
Putting together \eqref{eq:case1step3-1}, \eqref{eq:case1step3-2}, and \eqref{eq:case1step3-3}, we obtain
\be\label{eq:case1I2}
I_2=\int_{\mathcal{B}_{2 \delta}^{+} \backslash \mathcal{B}_{\delta}^{+}} t^{1-2\sigma}|\nabla_g \phi_{\varepsilon}|^{2} \,\ud v_g = o(\varepsilon).
\ee

{\bf Step 4:} Conclusion.

Firstly, \eqref{eq:I1+I2}, \eqref{eq:case1I1}, and \eqref{eq:case1I2} imply that
$$
\int_{M}\rho^{1-2\sigma}|\nabla_g \phi_{\varepsilon}|^{2}\,\ud v_g\leq \kappa_{\sigma}^{\frac{n-2\sigma}{2\sigma}}S(n,\sigma)^{-\frac{n}{2\sigma}}-CH(P)\varepsilon+o(\varepsilon).
$$
Moreover, if $n=3$, using \eqref{eq:step1n=3} we have
$$
\frac{\int_{M}\rho^{1-2\sigma}|\nabla_g \phi_{\varepsilon}|^{2}\,\ud v_g}{(\int_{\partial M} |\phi_{\varepsilon}|^{p}\,\ud s_g)^{2/p}}=
S(n,\sigma)^{-1}-CH(P)\varepsilon+o(\varepsilon) \quad \text{ if }\, 0<\sigma<1/2.
$$
If $n\geq 4$, it follows from \eqref{eq:step1n=4} and \eqref{eq:step1n>=5} that
$$
\frac{\int_{M}\rho^{1-2\sigma}|\nabla_g \phi_{\varepsilon}|^{2}\,\ud v_g}{(\int_{\partial M} |\phi_{\varepsilon}|^{p}\,\ud s_g)^{2/p}}=
S(n,\sigma)^{-1}-CH(P)\varepsilon+o(\varepsilon) \quad \text{ if }\, 0<\sigma<1.
$$
Using the assumption of the mean curvature $H(P)$, we obtain for $\varepsilon$ small enough,
$$
\frac{\int_{M}\rho^{1-2\sigma}|\nabla_g \phi_{\varepsilon}|^{2}\,\ud v_g}{(\int_{\partial M} |\phi_{\varepsilon}|^{p}\,\ud s_g)^{2/p}}<S(n,\sigma)^{-1}.
$$
Using Proposition \ref{pro:3}, this ends the proof of Theorem \ref{thm} in this case.

In the rest of this section we deal with the second case.

{\bf Case 2.} $n \geq 5$, $\sigma\in (0,1)$, $H(P)=0$ and
$$
\frac{3n^2-6n-4\sigma^2+4}{12(1-\sigma)(n-1)(n-2-2\sigma)}\bar{R}(P)+\|\pi\|^2(P)+
\frac{3n-2-2\sigma}{3n-6-6\sigma}R_{tt}(P)>0.
$$

Let us still use the notation of the first case.

{\bf Step 1:} Computation of $I_1$.

Notice that we may assume that $P$ is a maximum point of $H$. Indeed, if there exists $Q$ such that $H(Q)>H(P)=0$, we can finish the proof using Case 1. Therefore, $H_{,i}(P)=0$ for $1 \leq i \leq n$. It follows from \eqref{eq:detg} that
\be\label{eq:detgH=0}
\sqrt{|g|}(x,t)=1-\frac{1}{2}(\|\pi\|^{2}+R_{tt}) t^{2}-\frac{1}{6} \bar{R}_{i j} x^{i} x^{j}+O(|(x,t)|^{3}).
\ee
Similar to Case 1, using \eqref{eq:gij} we have
$$
\begin{aligned}
I_1&=\int_{\mathcal{B}_{\delta}^{+}}t^{1-2\sigma}|\nabla_g W_{\varepsilon}|^{2}\,\ud v_g\\
&=\int_{\mathcal{B}_{\delta}^{+}} t^{1-2\sigma}({g}^{i j}  \partial_{i}W_{\varepsilon}(x, t)  \partial_{j}W_{\varepsilon}(x, t)+(\partial_{t} W_{\varepsilon}(x, t))^{2}) \,\ud v_g\\
&=:J_1+J_2+J_3+J_4+J_5+J_6,
\end{aligned}
$$
where
$$
\begin{aligned}
J_1=&\int_{\mathcal{B}_{\delta}^{+}} t^{1-2\sigma}|\nabla W_{\varepsilon}|^2 \,\ud v_g,\\
J_2=&2 \pi^{i j}(P) \int_{\mathcal{B}_{\delta}^{+}} t^{2-2\sigma} \partial_{i}W_{\varepsilon} \partial_{j}W_{\varepsilon} \,\ud v_{g},\\
J_3=&-\frac{1}{3}\bar{R}^{i}{ }_{k l}{}^{j}(P) \int_{\mathcal{B}_{\delta}^{+}} t^{1-2\sigma} x^{k} x^{l} \partial_{i}W_{\varepsilon} \partial_{j}W_{\varepsilon}\,\ud  v_{g},\\
J_4=&g^{i j}{}_{, t m}(P) \int_{\mathcal{B}_{\delta}^{+}} t^{2-2\sigma} x^{m} \partial_{i}W_{\varepsilon}\partial_{j}W_{\varepsilon} \,\ud v_{g},\\
J_5=&(3 \pi^{i m}(P) \pi_{m}{ }^{j}(P)+R^{i}{ }_{t}{}^j{}_{t}(P)) \int_{\mathcal{B}_{\delta}^{+}} t^{3-2\sigma} \partial_{i}W_{\varepsilon} \partial_{j}W_{\varepsilon} \,\ud  v_{g},\\
J_6=&\int_{\mathcal{B}_{\delta}^{+}}t^{1-2\sigma} O(|(x,t)|^{3})|\nabla_x W_{\varepsilon}|^{2}\,\ud  v_{g}.
\end{aligned}
$$

In the following, we estimate these terms separately.

Firstly, by \eqref{eq:detgH=0} and \eqref{eq:important}, we have
$$
\begin{aligned}
J_1=&\int_{\mathcal{B}_{\delta}^{+}} t^{1-2\sigma}|\nabla W_{\varepsilon}|^2 \,\ud v_g\\
=&\int_{\mathcal{B}_{\delta}^{+}} t^{1-2\sigma}|\nabla W_{\varepsilon}|^2 \,\ud x\,\ud t-\frac{1}{2}(\|\pi\|^{2}+R_{tt})(P)\int_{\mathcal{B}_{\delta}^{+}} t^{3-2\sigma}|\nabla W_{\varepsilon}|^2 \,\ud x\,\ud t\\
&-\frac{1}{6} \bar{R}_{i j}(P)\int_{\mathcal{B}_{\delta}^{+}} t^{1-2\sigma}x^{i} x^{j}|\nabla W_{\varepsilon}|^2 \,\ud x\,\ud t+\int_{\mathcal{B}_{\delta}^{+}} t^{1-2\sigma}O(|(x,t)|^{3})|\nabla W_{\varepsilon}|^2 \,\ud x\,\ud t\\
=&\int_{\mathcal{B}_{\delta}^{+}} t^{1-2\sigma}|\nabla W_{\varepsilon}|^2 \,\ud x\,\ud t-\frac{1}{2}(\|\pi\|^{2}+R_{tt})(P)\int_{\mathcal{B}_{\delta}^{+}} t^{3-2\sigma}|\nabla W_{\varepsilon}|^2 \,\ud x\,\ud t\\
&-\frac{1}{6n} \bar{R}(P)\int_{\mathcal{B}_{\delta}^{+}} t^{1-2\sigma}|x|^2|\nabla W_{\varepsilon}|^2 \,\ud x\,\ud t+O(\varepsilon^3)\\
=&\int_{\mathcal{B}_{\delta}^{+}} t^{1-2\sigma}|\nabla W_{\varepsilon}|^2 \,\ud x\,\ud t-\frac{1}{2}(\|\pi\|^{2}+R_{tt})(P)\varepsilon^2\int_{\mathcal{B}_{\delta/\varepsilon}^{+}} t^{3-2\sigma}|\nabla W_{1}|^2 \,\ud x\,\ud t\\
&-\frac{1}{6n} \bar{R}(P)\varepsilon^2\int_{\mathcal{B}_{\delta/\varepsilon}^{+}} t^{1-2\sigma}|x|^2|\nabla W_{1}|^2 \,\ud x\,\ud t+O(\varepsilon^3)
\end{aligned}
$$
if $n>2\sigma+3$.

Now we try to estimate the second term $J_2$. By \eqref{eq:detgH=0} and \eqref{eq:important}, we have
$$
\begin{aligned}
J_2=&2 \pi^{i j}(P) \int_{\mathcal{B}_{\delta}^{+}} t^{2-2\sigma} \partial_i W_{\varepsilon} \partial_j W_{\varepsilon} \,\ud v_{g}\\
\leq&2 \pi^{i j}(P) \int_{\mathcal{B}_{\delta}^{+}} t^{2-2\sigma} \partial_i W_{\varepsilon} \partial_j W_{\varepsilon} \,\ud x\,\ud t+C\int_{\mathcal{B}_{\delta}^{+}} t^{1-2\sigma}|(x,t)|^3 |\nabla W_{\varepsilon}|^2  \,\ud x\,\ud t\\
=&2 \pi^{i j}(P) \int_{\mathcal{B}_{\delta}^{+}} t^{2-2\sigma} \partial_i W_{\varepsilon} \partial_j W_{\varepsilon} \,\ud x\,\ud t+O(\varepsilon^3).
\end{aligned}
$$
Since $\partial_i W_{\varepsilon}$ is odd in $x_{i}$ and $\pi^{i j}(P) \delta_{i j}=H(P)=0$, it holds
$$
2 \pi^{i j}(P) \int_{\mathcal{B}_{\delta}^{+}} t^{2-2\sigma} \partial_i W_{\varepsilon} \partial_j W_{\varepsilon} \,\ud x\,\ud t=0.
$$
Therefore,
$$
J_2=O(\varepsilon^3).
$$

Again using \eqref{eq:detgH=0} and \eqref{eq:important}, we have that

$$
\begin{aligned}
J_3=&-\frac{1}{3}\bar{R}^{i}{ }_{k l}{}^{j}(P) \int_{\mathcal{B}_{\delta}^{+}} t^{1-2\sigma} x^{k} x^{l} \partial_i W_{\varepsilon} \partial_j W_{\varepsilon}\,\ud  v_{g}\\
\leq &-\frac{1}{3}\bar{R}^{i}{ }_{k l}{}^{j}(P) \int_{\mathcal{B}_{\delta}^{+}} t^{1-2\sigma} x^{k} x^{l} \partial_i  W_{\varepsilon} \partial_j W_{\varepsilon}\,\ud x\,\ud t+C\int_{\mathcal{B}_{\delta}^{+}} t^{1-2\sigma}|(x,t)|^4 |\nabla W_{\varepsilon}|^2 \,\ud x\,\ud t\\
\leq &-\frac{1}{3}\bar{R}^{i}{ }_{k l}{}^{j}(P) \int_{\mathcal{B}_{\delta}^{+}} t^{1-2\sigma} x^{k} x^{l} \partial_i  W_{\varepsilon} \partial_j W_{\varepsilon}\,\ud x\,\ud t+C\delta\int_{\mathcal{B}_{\delta}^{+}} t^{1-2\sigma}|(x,t)|^3 |\nabla W_{\varepsilon}|^2 \,\ud x\,\ud t\\
= &-\frac{1}{3}\bar{R}^{i}{ }_{k l}{}^{j}(P) \int_{\mathcal{B}_{\delta}^{+}} t^{1-2\sigma} x^{k} x^{l} \partial_i  W_{\varepsilon} \partial_j W_{\varepsilon}\,\ud x\,\ud t+O(\varepsilon^3).
\end{aligned}
$$
It follows from the symmetries of the curvature tensor that
$$
-\frac{1}{3}\bar{R}^{i}{ }_{k l}{}^{j}(P) \int_{\mathcal{B}_{\delta}^{+}} t^{1-2\sigma} x^{k} x^{l} \partial_i  W_{\varepsilon} \partial_j W_{\varepsilon}\,\ud x\,\ud t=0.
$$
Hence,
$$
J_3=O(\varepsilon^3).
$$

Next, the calculations for $J_4$, $J_5$, and $J_6$ are very similar to the previous one. Indeed,
$$
\begin{aligned}
J_4=&g^{i j}{}_{, t m}(P) \int_{\mathcal{B}_{\delta}^{+}} t^{2-2\sigma} x^{m} \partial_i W_{\varepsilon} \partial_j W_{\varepsilon} \,\ud v_{g}\\
\leq&g^{i j}{}_{, t m}(P) \int_{\mathcal{B}_{\delta}^{+}} t^{2-2\sigma} x^{m} \partial_i W_{\varepsilon} \partial_j W_{\varepsilon} \,\ud x\,\ud t+C\int_{\mathcal{B}_{\delta}^{+}} t^{1-2\sigma}|(x,t)|^4 |\nabla W_{\varepsilon}|^2 \,\ud x\,\ud t\\
=&O(\varepsilon^3),
\end{aligned}
$$
$$
\begin{aligned}
J_5=&(3 \pi^{i m}(P) \pi_{m}{ }^{j}(P)+R^{i}{ }_{t}{}^j{}_{t}(P)) \int_{\mathcal{B}_{\delta}^{+}} t^{3-2\sigma} \partial_i W_{\varepsilon} \partial_j W_{\varepsilon} \,\ud  v_{g}\\
\leq&(3 \pi^{i m}(P) \pi_{m}{ }^{j}(P)+R^{i}{ }_{t}{}^j{}_{t}(P)) \int_{\mathcal{B}_{\delta}^{+}} t^{3-2\sigma} \partial_i W_{\varepsilon} \partial_j W_{\varepsilon} \,\ud x\,\ud t\\
&+C\int_{\mathcal{B}_{\delta}^{+}} t^{1-2\sigma}|(x,t)|^4 |\nabla W_{\varepsilon}|^2 \,\ud x\,\ud t\\
=&\frac{(3\|\pi\|^{2}+R_{tt})(P)}{n} \int_{\mathcal{B}_{\delta}^{+}} t^{3-2\sigma} |\nabla_x W_{\varepsilon}|^2 \,\ud x\,\ud t+ O(\varepsilon^3)\\
=&\frac{(3\|\pi\|^{2}+R_{tt})(P)}{n}\varepsilon^2 \int_{\mathcal{B}_{\delta/\varepsilon}^{+}} t^{3-2\sigma} |\nabla_x W_{1}|^2 \,\ud x\,\ud t+ O(\varepsilon^3),
\end{aligned}
$$
and
$$
J_6=\int_{\mathcal{B}_{\delta}^{+}}t^{1-2\sigma} O(|(x,t)|^{3})|\nabla_x W_{\varepsilon}|^{2}\,\ud  v_{g}= O(\varepsilon^3).
$$

By the above estimates and \eqref{eq:H>0-2}, we obtain
$$
\begin{aligned}
I_1=&J_1+J_2+J_3+J_4+J_5+J_6\\
\leq&\kappa_{\sigma}^{\frac{n-2\sigma}{2\sigma}}S(n,\sigma)^{-\frac{n}{2\sigma}}-\frac{1}{2}(\|\pi\|^{2}+R_{tt})(P)\varepsilon^2\int_{\mathcal{B}_{\delta/\varepsilon}^{+}} t^{3-2\sigma}|\nabla W_{1}|^2 \,\ud x\,\ud t\\
&-\frac{1}{6n} \bar{R}(P)\varepsilon^2\int_{\mathcal{B}_{\delta/\varepsilon}^{+}} t^{1-2\sigma}|x|^2|\nabla W_{1}|^2 \,\ud x\,\ud t\\
&+\frac{(3\|\pi\|^{2}+R_{tt})(P)}{n}\varepsilon^2 \int_{\mathcal{B}_{\delta/\varepsilon}^{+}} t^{3-2\sigma} |\nabla_x W_{1}|^2 \,\ud x\,\ud t+ O(\varepsilon^3).
\end{aligned}
$$
It follows from Lemma \ref{lem:appendix1-2} that
\be\label{eq:case2I1}
\begin{aligned}
I_1\leq&\kappa_{\sigma}^{\frac{n-2\sigma}{2\sigma}}S(n,\sigma)^{-\frac{n}{2\sigma}}-\frac{1}{2}(\|\pi\|^{2}+R_{tt})(P)\varepsilon^2(2(1-\sigma) A_{0}+o(1))\\
&-\frac{1}{6n} \bar{R}(P)\varepsilon^2\Big(\frac{n(n^{2}-4 n(1-\sigma)+4(1-\sigma-\sigma^{2}))}{4 \sigma(n-1)} A_{0}+o(1)\Big)\\
&+\frac{(3\|\pi\|^{2}+R_{tt})(P)}{n}\varepsilon^2 \Big(\frac{2(1-\sigma^{2})}{3} A_{0}+o(1)\Big)+ O(\varepsilon^3)\\
=&\kappa_{\sigma}^{\frac{n-2\sigma}{2\sigma}}S(n,\sigma)^{-\frac{n}{2\sigma}}-\frac{n^{2}-4 n(1-\sigma)+4(1-\sigma-\sigma^{2})}{24 \sigma(n-1)}\bar{R}(P) A_{0}\varepsilon^2\\
&-\frac{(1-\sigma)(n-2-2\sigma)}{n}\|\pi\|^{2}(P)A_{0}\varepsilon^2\\
&-\frac{(1-\sigma)(3n-2-2\sigma)}{3n}R_{tt}(P)A_{0}\varepsilon^2 + o(\varepsilon^2).
\end{aligned}
\ee

{\bf Step 2:} Computation of $I_2$.

By Lemma \ref{lem:appendix1-1}, we have
$$
\Big(\frac{\delta}{\varepsilon}\Big)^{3} \int_{\mathcal{B}_{2 \delta / \varepsilon}^{+} \backslash \mathcal{B}_{\delta / \varepsilon}^{+}} t^{1-2\sigma}|\nabla W_{1}|^{2} \,\ud x \,\ud t \leq \int_{\mathcal{B}_{2 \delta / \varepsilon}^{+} \backslash \mathcal{B}_{\delta / \varepsilon}^{+}} t^{1-2\sigma}|(x, t)|^3|\nabla W_{1}|^{2} \,\ud x \,\ud t<\infty
$$
if $n>2\sigma+3$. Therefore,
\be\label{eq:case2step2-1}
\int_{\mathcal{B}_{2 \delta}^{+} \backslash \mathcal{B}_{\delta}^{+}} t^{1-2\sigma}|\nabla W_{\varepsilon}|^{2} \,\ud x \,\ud t =\int_{\mathcal{B}_{2 \delta/\varepsilon}^{+} \backslash \mathcal{B}_{\delta/\varepsilon}^{+}} t^{1-2\sigma}|\nabla W_{1}|^{2} \,\ud x \,\ud t= O(\varepsilon^3).
\ee
In view of \eqref{eq:case1step3-1}, \eqref{eq:case1step3-2}, and \eqref{eq:case2step2-1}, we obtain
$$
I_2=\int_{\mathcal{B}_{2 \delta}^{+} \backslash \mathcal{B}_{\delta}^{+}} t^{1-2\sigma}|\nabla_g \phi_{\varepsilon}|^{2} \,\ud v_g = o(\varepsilon^{2}).
$$

{\bf Step 3:} Conclusion.

Using \eqref{eq:H>0-2} and Lemma \ref{lem:appendix1-2}, we have
$$
\kappa_{\sigma}^{\frac{n-2\sigma}{2\sigma}}S(n,\sigma)^{-\frac{n}{2\sigma}}=\int_{\mathbb{R}_{+}^{n+1}} t^{1-2 \sigma}|\nabla W_{1}|^{2} \,\ud x\,\ud t=\frac{(n-2)(n-2+2 \sigma)(n-2-2 \sigma)}{4 \sigma(n-1)} A_{0},
$$
i.e.,
\be\label{eq:A0}
A_{0}=\frac{4 \sigma(n-1)}{(n-2)(n-2+2 \sigma)(n-2-2 \sigma)}\kappa_{\sigma}^{\frac{n-2\sigma}{2\sigma}}S(n,\sigma)^{-\frac{n}{2\sigma}}.
\ee
Inserting \eqref{eq:A0} into \eqref{eq:case2I1}, we obtain
$$
\begin{aligned}
I_1\leq&\kappa_{\sigma}^{\frac{n-2\sigma}{2\sigma}}S(n,\sigma)^{-\frac{n}{2\sigma}}-\frac{n^{2}-4 n(1-\sigma)+4(1-\sigma-\sigma^{2})}{24 \sigma(n-1)}\bar{R}(P) A_{0}\varepsilon^2\\
&-\frac{(1-\sigma)(n-2-2\sigma)}{n}\|\pi\|^{2}(P)A_{0}\varepsilon^2\\
&-\frac{(1-\sigma)(3n-2-2\sigma)}{3n}R_{tt}(P)A_{0}\varepsilon^2 + o(\varepsilon^2)\\
=&\kappa_{\sigma}^{\frac{n-2\sigma}{2\sigma}}S(n,\sigma)^{-\frac{n}{2\sigma}}\Big(1-\frac{n^{2}-4 n(1-\sigma)+4(1-\sigma-\sigma^{2})}{6(n-2)(n-2+2 \sigma)(n-2-2 \sigma)}\bar{R}(P)\varepsilon^2\\
&-\frac{4\sigma(1-\sigma)(n-1)}{n(n-2)(n-2+2 \sigma)}\|\pi\|^{2}(P)\varepsilon^2\\
&-\frac{4\sigma(1-\sigma)(n-1)(3n-2-2\sigma)}{3n(n-2)(n-2+2 \sigma)(n-2-2 \sigma)}R_{tt}(P)\varepsilon^2+o(\varepsilon^2)\Big).
\end{aligned}
$$
Therefore,
$$
\begin{aligned}
&\int_{M}\rho^{1-2\sigma}|\nabla_g \phi_{\varepsilon}|^{2}\,\ud v_g\\
\leq&\kappa_{\sigma}^{\frac{n-2\sigma}{2\sigma}}S(n,\sigma)^{-\frac{n}{2\sigma}}\Big(1-\frac{n^{2}-4 n(1-\sigma)+4(1-\sigma-\sigma^{2})}{6(n-2)(n-2+2 \sigma)(n-2-2 \sigma)}\bar{R}(P)\varepsilon^2\\
&-\frac{4\sigma(1-\sigma)(n-1)}{n(n-2)(n-2+2 \sigma)}\|\pi\|^{2}(P)\varepsilon^2\\
&-\frac{4\sigma(1-\sigma)(n-1)(3n-2-2\sigma)}{3n(n-2)(n-2+2 \sigma)(n-2-2 \sigma)}R_{tt}(P)\varepsilon^2+o(\varepsilon^2)\Big)\\
=&:\kappa_{\sigma}^{\frac{n-2\sigma}{2\sigma}}S(n,\sigma)^{-\frac{n}{2\sigma}}(1+K_1\bar{R}(P)\varepsilon^2+K_2\|\pi\|^{2}(P)\varepsilon^2
+K_3R_{tt}(P)\varepsilon^2+o(\varepsilon^2)),
\end{aligned}
$$
where
$$
\begin{aligned}
K_1=&-\frac{n^{2}-4 n(1-\sigma)+4(1-\sigma-\sigma^{2})}{6(n-2)(n-2+2 \sigma)(n-2-2 \sigma)},\\
K_2=&-\frac{4\sigma(1-\sigma)(n-1)}{n(n-2)(n-2+2 \sigma)},\\
K_3=&-\frac{4\sigma(1-\sigma)(n-1)(3n-2-2\sigma)}{3n(n-2)(n-2+2 \sigma)(n-2-2 \sigma)}.
\end{aligned}
$$

Note that \eqref{eq:step1n>=5} is always true, hence
$$
\begin{aligned}
&\frac{\int_{M}\rho^{1-2\sigma}|\nabla_g \phi_{\varepsilon}|^{2}\,\ud v_g}{(\int_{\partial M} |\phi_{\varepsilon}|^{p}\,\ud s_g)^{2/p}}\\
=&S(n,\sigma)^{-1}(1+\bar{K}_1\bar{R}(P)\varepsilon^2+K_2\|\pi\|^{2}(P)\varepsilon^2
+K_3R_{tt}(P)\varepsilon^2+o(\varepsilon^2)),
\end{aligned}
$$
where
$$
\begin{aligned}
\bar{K}_1=&K_1+\frac{n-2\sigma}{6n(n-2)}\\
=&-\frac{\sigma(3n^2-6n-4\sigma^2+4)}{3n(n-2)(n-2+2 \sigma)(n-2-2 \sigma)}.
\end{aligned}
$$
By direct calculations,
$$
\bar{K}_1\bar{R}(P)+K_2\|\pi\|^{2}(P)+K_3R_{tt}(P)<0
$$
is equivalent to
$$
\frac{3n^2-6n-4\sigma^2+4}{12(1-\sigma)(n-1)(n-2-2\sigma)}\bar{R}(P)+\|\pi\|^2(P)+
\frac{3n-2-2\sigma}{3n-6-6\sigma}R_{tt}(P)>0.
$$
Under the hypothesis of this case, we obtain
$$
\frac{\int_{M}\rho^{1-2\sigma}|\nabla_g \phi_{\varepsilon}|^{2}\,\ud v_g}{(\int_{\partial M} |\phi_{\varepsilon}|^{p}\,\ud s_g)^{2/p}}<S(n,\sigma)^{-1}
$$
for $\varepsilon$ small enough. By Proposition \ref{pro:3}, this finishes the proof of Theorem \ref{thm}.

\appendix

\section{Appendix}

In this appendix, we will provide some lemmas used in the previous sections.
\begin{lemma}\label{lem:appendix1-1}
Let $\sigma\in (0,1)$, $n>2 \sigma$, and $W_{\varepsilon}=W_{\varepsilon, 0}$ is defined in \eqref{eq:Wvarsigma}. Then there holds
\begin{enumerate}[(1)]
  \item
$W_{\varepsilon}(x, t)=O(\varepsilon^{(n-2 \sigma) / 2}(\varepsilon^{2}+|(x,t)|^{2})^{-(n-2 \sigma) / 2})$,
  \item
$\nabla_{x} W_{\varepsilon}(x, t)=O(\varepsilon^{(n-2 \sigma) / 2}(\varepsilon^{2}+|(x,t)|^{2})^{-(n-2 \sigma+1) / 2})$,
  \item
$\partial_{t} W_{\varepsilon}(x, t)=O(\varepsilon^{(n-2 \sigma) / 2} t^{2 \sigma-1}(\varepsilon^{2}+|(x,t)|^{2})^{-n / 2})$.
\end{enumerate}
\end{lemma}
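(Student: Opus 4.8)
The plan is to reduce all three estimates to the case $\varepsilon=1$ by scaling and then to estimate the Poisson-type integral \eqref{eq:Wvarsigma} directly. Recall from the paragraph preceding this appendix that $W_\varepsilon(x,t)=\varepsilon^{-(n-2\sigma)/2}W_1(\varepsilon^{-1}x,\varepsilon^{-1}t)$ and $w_\varepsilon(y)=\varepsilon^{-(n-2\sigma)/2}w_1(\varepsilon^{-1}y)$. Differentiating these identities in $x$ or in $t$ and evaluating at the rescaled base point, one checks that each of (1)--(3) for a general $\varepsilon$ is, after collecting the powers of $\varepsilon$, equivalent to the same statement for $\varepsilon=1$:
\[
W_1(x,t)=O\big((1+|(x,t)|^2)^{-\frac{n-2\sigma}{2}}\big),\qquad \nabla_x W_1(x,t)=O\big((1+|(x,t)|^2)^{-\frac{n-2\sigma+1}{2}}\big),
\]
together with $\partial_t W_1(x,t)=O\big(t^{2\sigma-1}(1+|(x,t)|^2)^{-n/2}\big)$, the constants depending only on $n$ and $\sigma$. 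From now on I write $W=W_1$, $w=w_1$, $r=|(x,t)|$, $R(y)=|x-y|^2+t^2$, and $P_t(z)=p_{n,\sigma}t^{2\sigma}(|z|^2+t^2)^{-(n+2\sigma)/2}$, which satisfies $\int_{\mathbb{R}^n}P_t\,dz=1$ for every $t>0$ by scaling; it suffices to treat $r\geq 1$ (for $r\leq 1$ one simply replaces $1+r^2$ by a constant in the estimates below).

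For the bound on $W$, split $\mathbb{R}^n=\{|y|\leq r/4\}\cup\{|y|>r/4\}$. On the first set one has $R(y)\geq c\,r^2$ (treat separately $t\geq r/4$ and $|x|\geq r/2$, one of which always holds), and $\int_{\{|y|\leq r/4\}}w(y)\,dy\leq C r^{2\sigma}$ because $n-2\sigma<n$, so this part of the integral in \eqref{eq:Wvarsigma} is $\lesssim t^{2\sigma} r^{-(n+2\sigma)} r^{2\sigma}\leq r^{-(n-2\sigma)}$. On the second set $w(y)\leq C(1+r^2)^{-(n-2\sigma)/2}$, and $\int_{\mathbb{R}^n}P_t=1$, so that part is $\lesssim(1+r^2)^{-(n-2\sigma)/2}$. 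Adding the two gives (1).

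For (2), differentiating \eqref{eq:Wvarsigma} under the integral sign (equivalently, integrating by parts in $y$) gives $\partial_{x_k}W=P_t\ast(\partial_{y_k}w)$, where $\partial_{y_k}w$ is odd in $y_k$ and obeys $|\partial_{y_k}w(z)|\leq C(1+|z|)^{-(n-2\sigma+1)}$. For (3), integration by parts is not available, so I differentiate the kernel in $t$ and combine the two resulting terms to obtain
\[
\partial_t W(x,t)=t^{2\sigma-1}\int_{\mathbb{R}^n}\frac{2\sigma|x-y|^2-nt^2}{R(y)^{\frac{n+2\sigma}{2}+1}}\,p_{n,\sigma}\,w(y)\,dy .
\]
The key point is that $2\sigma|x-y|^2-nt^2$ integrates to zero against $R(y)^{-(n+2\sigma)/2-1}$ over $\mathbb{R}^n$ — the assertion that the $\sigma$-harmonic extension of a constant is constant — which follows from the beta integral $\int_{\mathbb{R}^n}(1+|s|^2)^{-b}\,ds=\pi^{n/2}\Gamma(b-n/2)/\Gamma(b)$. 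Hence in (3) we may replace $w(y)$ by $w(y)-w(x)$, and in (2) the oddness of $\partial_{y_k}w$ lets us subtract its value at the nearest boundary point (and, when necessary for the sharp power of $t$, also the first-order Taylor polynomial of $w$ at $y=x$). Running the near/far decomposition of the previous paragraph on these centered integrals, and using $|w(y)-w(x)|\leq|y-x|\,\|\nabla w\|_{L^\infty}$ together with the tail decay of $w$ and $\nabla w$, produces the extra factor $(1+r^2)^{-1/2}$ in (2) and the weight $t^{2\sigma-1}$ in (3).

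The integral splitting is routine; the real work is the bookkeeping. One has to keep track of which of $t$ and $|x|$ is comparable to $r$, of whether the integration variable $y$ is near the concentration point of $w$ (the origin, where $w$ and its derivatives are of order one) or in its polynomial tail, and of whether $\sigma\leq 1/2$ or $\sigma>1/2$ (this governs the integrability of $\nabla w$ and the sign of the exponent $2\sigma-1$); in each regime the estimate is elementary, but the constants must be checked to be uniform in $\varepsilon$, and one must verify that the cancellations really do upgrade the decay to the stated exponents. I expect this case analysis in (2)--(3), especially the $t$-derivative near the boundary, to be the only delicate part. An alternative to the last step, avoiding the explicit integrals, is to apply interior and boundary gradient estimates for the degenerate elliptic operator $\operatorname{div}(t^{1-2\sigma}\nabla\cdot)$ on balls of radius comparable to the distance to the origin, fed by the pointwise bound (1); this trades the computational bookkeeping for (heavier) weighted elliptic regularity theory.
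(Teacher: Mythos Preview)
The paper does not prove this lemma at all: its entire ``proof'' is a reference to \cite{MayerNdiayeFractional} and \cite{NdiayeSireSunUniformization2021}. Your proposal is therefore a genuine self-contained argument where the paper offers none. The scaling reduction to $\varepsilon=1$ and the near/far splitting for (1) are correct and complete. For (2) and (3) your strategy is sound: the cancellations you isolate---oddness of $\partial_{y_k}w$ over centered balls, the mean-zero identity for the $t$-differentiated Poisson kernel (which you verify via the beta integral), and, when needed, subtraction of the first-order Taylor polynomial of $w$ at $x$---are exactly what upgrades the naive bound to the stated exponents; in particular the second-order subtraction is what rescues (3) in the regime $\sigma>1/2$, $t\ll r$, where subtracting only the constant $w(x)$ is not enough. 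Your phrase ``subtract its value at the nearest boundary point'' in (2) is imprecise; what actually works there is to subtract $P_t(x)$ on the near set $\{|y|\le r/4\}$ and then use $\int_{\{|y|\le r/4\}}\partial_{y_k}w\,dy=0$. The alternative you mention---feeding the pointwise bound (1) into interior and boundary gradient estimates for $\operatorname{div}(t^{1-2\sigma}\nabla\cdot)$---is closer to how such bounds are usually obtained in the cited literature and sidesteps the case analysis; either way your outline is substantially more informative than the paper's one-line citation.
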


\begin{proof}
The proof can be found in \cite[Corollary 3.2]{MayerNdiayeFractional}, see also \cite[Lemma A.1]{NdiayeSireSunUniformization2021}.
\end{proof}

\begin{lemma}\label{lem:appendix1-2}
Let $\sigma\in (0,1)$, $n>2\sigma+2$, and $W_{1}=W_{1, 0}$ is defined in \eqref{eq:Wvarsigma}. Then there holds
\be\label{eq:appendix-3}
\int_{\mathbb{R}_{+}^{n+1}} t^{1-2 \sigma}|x|^{2}|\nabla W_{1}|^{2} \,\ud x\,\ud t=\frac{n(n^{2}-4 n(1-\sigma)+4(1-\sigma-\sigma^{2}))}{4 \sigma(n-1)} A_{0},
\ee
$$
\int_{\mathbb{R}_{+}^{n+1}} t^{1-2 \sigma}|\nabla W_{1}|^{2} \,\ud x\,\ud t=\frac{(n-2)(n-2+2 \sigma)(n-2-2 \sigma)}{4 \sigma(n-1)} A_{0},
$$
$$
\int_{\mathbb{R}_{+}^{n+1}} t^{3-2 \sigma}|\nabla_{x} W_{1}|^{2} \,\ud x\,\ud t=\frac{2(1-\sigma^{2})}{3} A_{0},
$$
$$
\int_{\mathbb{R}_{+}^{n+1}} t^{3-2 \sigma}|\nabla W_{1}|^{2} \,\ud x\,\ud t=2(1-\sigma) A_{0},
$$
where
$$
A_{0}=\int_{\mathbb{R}_{+}^{n+1}} t^{1-2 \sigma} W_{1}^{2} \,\ud x\,\ud t<\infty.
$$
\end{lemma}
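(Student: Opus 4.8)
The plan is to reduce all four integrals to the single quantity $A_0=\int_{\mathbb{R}_{+}^{n+1}} t^{1-2\sigma} W_1^2\,\ud x\,\ud t$ by testing the extension equation against a handful of auxiliary functions, leaving exactly one honest analytic computation at the end. Throughout I use that the $\sigma$-harmonic extension $W_1$ solves $\operatorname{div}(t^{1-2\sigma}\nabla W_1)=0$ in $\mathbb{R}_{+}^{n+1}$, that $W_1(\cdot,0)=w_1$, and that it satisfies the fractional Neumann condition $-\lim_{t\to0^+}t^{1-2\sigma}\partial_t W_1 = d_{n,\sigma}\,w_1^{p-1}$ for an explicit constant $d_{n,\sigma}>0$ (because $w_1$ is an extremal, so $(-\Delta)^\sigma w_1$ is a positive multiple of $w_1^{p-1}$). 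The decay estimates of Lemma~\ref{lem:appendix1-1} guarantee, exactly under the hypothesis $n>2\sigma+2$, that every integral written below is finite and that no boundary term survives as $t\to0^+$ or at spatial infinity, so each integration by parts is legitimate; in every case the identity I exploit is $\int_{\mathbb{R}_{+}^{n+1}} t^{1-2\sigma}\nabla W_1\cdot\nabla\psi\,\ud x\,\ud t = -\int_{\mathbb{R}^n}\bigl(\lim_{t\to0^+}t^{1-2\sigma}\partial_t W_1\bigr)\,\psi(\cdot,0)\,\ud x$ for a suitable $\psi$.

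First I would collect the ``soft'' identities. Taking $\psi=t^2W_1$ (whose trace vanishes) and expanding $\nabla W_1\cdot\nabla(t^2W_1)=t^2|\nabla W_1|^2+t\,\partial_t(W_1^2)$, an integration by parts in $t$ in the last term turns the identity into $\int t^{3-2\sigma}|\nabla W_1|^2=2(1-\sigma)A_0$, the fourth formula. Taking $\psi=t^3\partial_t W_1$ (trace again zero) and using $\nabla W_1\cdot\nabla(t^3\partial_t W_1)=3t^2(\partial_t W_1)^2+\tfrac12t^3\partial_t|\nabla W_1|^2$, an integration by parts in $t$ gives $3\int t^{3-2\sigma}(\partial_t W_1)^2=(2-\sigma)\int t^{3-2\sigma}|\nabla W_1|^2$; combined with the previous line, $\int t^{3-2\sigma}(\partial_t W_1)^2=\tfrac23(1-\sigma)(2-\sigma)A_0$, whence $\int t^{3-2\sigma}|\nabla_x W_1|^2=\int t^{3-2\sigma}|\nabla W_1|^2-\int t^{3-2\sigma}(\partial_t W_1)^2=\tfrac23(1-\sigma^2)A_0$, the third formula. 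Next, $\psi=W_1$ (using the Neumann condition on the right) gives $\int t^{1-2\sigma}|\nabla W_1|^2=d_{n,\sigma}\int_{\mathbb{R}^n}w_1^p$; and $\psi=|x|^2W_1$ gives, after noting $\int t^{1-2\sigma}W_1(x\cdot\nabla_x W_1)=\tfrac12\int t^{1-2\sigma}x\cdot\nabla_x(W_1^2)=-\tfrac n2A_0$ (the $x$-divergence integrates to zero) and $\int_{\mathbb{R}^n}|x|^2w_1^p=\tfrac{n}{n-2}\int_{\mathbb{R}^n}w_1^p$ (the elementary one-dimensional identity already recorded in \eqref{eq:intparts2}), the relation $\int t^{1-2\sigma}|x|^2|\nabla W_1|^2=nA_0+\tfrac{n}{n-2}\int t^{1-2\sigma}|\nabla W_1|^2$. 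Thus the first formula will follow once the second one is proved.

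It remains to prove the second formula, $\int t^{1-2\sigma}|\nabla W_1|^2=\tfrac{(n-2)(n-2+2\sigma)(n-2-2\sigma)}{4\sigma(n-1)}A_0$; this is the one place where I must genuinely evaluate integrals rather than just use the equation and the dilation structure. I would argue by Plancherel in the $x$-variable: writing $\widehat{W_1}(\xi,t)=\varphi(|\xi|t)\widehat{w_1}(\xi)$, where $\varphi(s)=\tfrac{2^{1-\sigma}}{\Gamma(\sigma)}s^{\sigma}K_{\sigma}(s)$ is the bounded solution of $(s^{1-2\sigma}\varphi')'=s^{1-2\sigma}\varphi$ normalized by $\varphi(0)=1$, one obtains $A_0=(2\pi)^{-n}m_1\int_{\mathbb{R}^n}|\xi|^{2\sigma-2}|\widehat{w_1}|^2\,\ud\xi$ and $\int t^{1-2\sigma}|\nabla W_1|^2=(2\pi)^{-n}m_2\int_{\mathbb{R}^n}|\xi|^{2\sigma}|\widehat{w_1}|^2\,\ud\xi$, where $m_1=\int_0^\infty s^{1-2\sigma}\varphi^2\,\ud s$ and $m_2=\int_0^\infty s^{1-2\sigma}(\varphi^2+\varphi'^2)\,\ud s$. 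Integrating $\varphi'^2$ by parts against the ODE and using $\lim_{s\to0^+}s^{1-2\sigma}\varphi'(s)=-\kappa_\sigma^{-1}$ gives $m_2=\kappa_\sigma^{-1}$; the classical Bessel moment $\int_0^\infty sK_\sigma(s)^2\,\ud s=\tfrac12\Gamma(1+\sigma)\Gamma(1-\sigma)$ gives $m_1=\sigma\kappa_\sigma^{-1}$, so $m_2/m_1=1/\sigma$. Finally $\widehat{w_1}(\xi)$ is a constant multiple of $|\xi|^{-\sigma}K_\sigma(|\xi|)$, and the moment formula $\int_0^\infty r^{\mu-1}K_\sigma(r)^2\,\ud r=\tfrac{\sqrt\pi}{4\,\Gamma(\frac{\mu+1}{2})}\Gamma(\tfrac\mu2)\Gamma(\tfrac\mu2+\sigma)\Gamma(\tfrac\mu2-\sigma)$, applied with $\mu=n$ and $\mu=n-2$, yields after cancelling the Gamma factors $\int|\xi|^{2\sigma}|\widehat{w_1}|^2\big/\int|\xi|^{2\sigma-2}|\widehat{w_1}|^2=\tfrac{(n-2)(n-2+2\sigma)(n-2-2\sigma)}{4(n-1)}$. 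Multiplying the two ratios gives the claimed value; substituting it into the relation of the previous paragraph gives the first formula, and the proof is complete.

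The main obstacle is precisely this last step. The three $t^{3-2\sigma}$-weighted formulas and the reduction of the $|x|^2$-weighted one are ``soft'': they use only the equation, the dilation covariance of the extension problem, and the nonlinear Neumann condition. No soft manipulation can possibly pin down $\int t^{1-2\sigma}|\nabla W_1|^2$ against $A_0$, because the former is invariant under $W_1\mapsto W_\varepsilon$ while the latter scales like $\varepsilon^2$; so the constant $\tfrac{(n-2)(n-2+2\sigma)(n-2-2\sigma)}{4\sigma(n-1)}$ genuinely records the detailed profile of $W_1$ and has to be extracted from special-function identities. A secondary, purely bookkeeping difficulty is checking admissibility of each test function: the convergence of $\int t^{1-2\sigma}|x|^2|\nabla W_1|^2$ and of the Fourier-side integral $\int|\xi|^{2\sigma-2}|\widehat{w_1}|^2$ both require exactly $n>2\sigma+2$, which is where that hypothesis enters, and the vanishing of the boundary terms at $t=0$ in the $t^2W_1$ and $t^3\partial_tW_1$ tests must be verified against the rates in Lemma~\ref{lem:appendix1-1}.
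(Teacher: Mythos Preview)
Your proof is correct and takes a genuinely different route from the paper's. The paper proves only \eqref{eq:appendix-3} explicitly, entirely on the Fourier side: it writes $\widehat{W_1}(\xi,t)=\hat w_1(|\xi|)\varphi(|\xi|t)$, expresses $\int t^{1-2\sigma}|x|^2|\nabla W_1|^2$ via Plancherel as $\sum_i\int \xi_i\widehat W_1\cdot(-\Delta_\xi)(\xi_i\widehat W_1)\,\ud\xi$ plus an analogous term for $\partial_t W_1$, expands using the ODEs for $\hat w_1$ and $\varphi$, and then cites \cite[Lemma~B.2]{KimMussoWei2018} to evaluate the resulting one-dimensional integrals; the remaining three formulas are declared ``similar''. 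By contrast, you separate the problem into a structural part and a computational part: testing the extension equation against $t^2W_1$, $t^3\partial_tW_1$, $W_1$, and $|x|^2W_1$ delivers the two $t^{3-2\sigma}$-weighted identities outright and reduces the $|x|^2$-weighted identity to the second formula, with no Fourier analysis at all; only the single ratio $\int t^{1-2\sigma}|\nabla W_1|^2\big/A_0$ then requires Plancherel and Bessel moments, and you compute it explicitly rather than citing an external lemma. What your approach buys is self-containment and a transparent explanation of why the hypothesis $n>2\sigma+2$ is needed (exactly for the convergence of $A_0$ and of $\int|\xi|^{2\sigma-2}|\hat w_1|^2$) and of which constants are forced by the equation alone versus which encode the specific profile of $W_1$. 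What the paper's approach buys is uniformity: all four formulas are handled by the same Fourier machinery, so one does not have to invent a new test function for each identity.
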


\begin{proof}
We only prove \eqref{eq:appendix-3}, the others are similar. The idea of the proof is using the Plancherel theorem and an explicit formula of the Fourier transform of $W_1(x,t)$ in $x$, which is motivated from \cite{GonzalezQing2013}, as well as \cite{ChenDengKimClustered2017, GonzalezWang2018, KimMussoWei2017, KimMussoWei2018}.

Firstly, by the Plancherel theorem, we obtain
$$
\begin{aligned}
\int_{\mathbb{R}^{n}}|x|^{2}|\nabla_x W_1(x, t)|^{2} \,\ud x=&\sum_{i=1}^{n}\||\cdot|{ } \partial_i W_1(\cdot, t)\|_{L^{2}(\mathbb{R}^{n})}^{2}\\
=&\sum_{i=1}^{n} \int_{\mathbb{R}^{n}} \xi_{i} \widehat{W}_1(|\xi|, t) \cdot(-\Delta)_{\xi}(\xi_{i} \widehat{W}_1(|\xi|, t)) \,\ud \xi,
\end{aligned}
$$
where $\widehat{W}_1(\xi, t)$ is the Fourier transform of $W_1(x, t)$ with respect to the variable $x \in \mathbb{R}^{n}$. It follows from \cite{GonzalezGamma2009} and \cite{GonzalezWang2018} that
$$
\widehat{W}_1(\xi, t)=\hat{w}_1(\xi) \varphi(|\xi| t) \quad \text { for all }\, \xi \in \mathbb{R}^{n}\, \text { and }\, t>0,
$$
where $\hat{w}_1(r)$, $r=|\xi|$ satisfies
\be\label{eq:appendixequ-1}
\hat{w}_1^{\prime \prime}(r)+\frac{1+2 \sigma}{r} \hat{w}_1^{\prime}(r)-\hat{w}_1(r)=0,
\ee
and $\varphi(s)$ satisfies
\be\label{eq:appendixequ-2}
\varphi^{\prime \prime}(s)+\frac{1-2 \sigma}{s} \varphi^{\prime}(s)-\varphi(s)=0.
\ee
By some tedious calculations and the relation
$$
(-\Delta_{\xi})(\xi_{i} \widehat{W}_1)=-2 \partial_{i}\widehat{W}_1-\xi_{i}\Delta_{\xi} \widehat{W}_1
$$
and
$$
\Delta_{\xi} \widehat{W}_1=\widehat{W}_1^{\prime\prime}+(n-1)r^{-1}\widehat{W}_1^{\prime},
$$
where $'$ represents the differentiation with respect to the radial variable $|\xi|$, it holds that
$$
\begin{aligned}
&\int_{\mathbb{R}_{+}^{n+1}} t^{1-2 \sigma}|x|^{2}|\nabla_x W_{1}|^{2} \,\ud x\,\ud t\\
=&-(n-2\sigma)\omega_{n-1}\int_0^{\infty}\int_0^{\infty} t^{1-2 \sigma}\hat{w}_1(r) \varphi^2(r t)\hat{w}_1^{\prime}(r)r^n\,\ud r\,\ud t\\
&-(n+2\sigma)\omega_{n-1}\int_0^{\infty}\int_0^{\infty} t^{2-2 \sigma}\hat{w}_1^2(r) \varphi(r t)\varphi^{\prime}(r t)r^n\,\ud r\,\ud t\\
&-\omega_{n-1}\int_0^{\infty}\int_0^{\infty} t^{1-2 \sigma}(1+t^{2})\hat{w}_1^2(r) \varphi^2(r t)r^{n+1}\,\ud r\,\ud t\\
&-2\omega_{n-1}\int_0^{\infty}\int_0^{\infty} t^{2-2 \sigma}\hat{w}_1(r) \varphi(r t) \hat{w}_1^{\prime}(r) \varphi^{\prime}(r t)r^{n+1}\,\ud r\,\ud t\\
=&-(n-2\sigma)\omega_{n-1}\Big(\int_0^{\infty}r^{n-2+2\sigma}\hat{w}_1(r)\hat{w}_1^{\prime}(r)\,\ud r\Big)\Big(\int_0^{\infty}s^{1-2\sigma}\varphi^2(s)\,\ud s\Big)\\
&-(n+2\sigma)\omega_{n-1}\Big(\int_0^{\infty}r^{n-3+2\sigma}\hat{w}_1^2(r)\,\ud r\Big)\Big(\int_0^{\infty}s^{2-2\sigma}\varphi(s)\varphi^{\prime}(s)\,\ud s\Big)\\
&-\omega_{n-1}\Big(\int_0^{\infty}r^{n-1+2\sigma}\hat{w}_1^2(r)\,\ud r\Big)\Big(\int_0^{\infty}s^{1-2\sigma}\varphi^2(s)\,\ud s\Big)\\
&-\omega_{n-1}\Big(\int_0^{\infty}r^{n-3+2\sigma}\hat{w}_1^2(r)\,\ud r\Big)\Big(\int_0^{\infty}s^{3-2\sigma}\varphi^2(s)\,\ud s\Big)\\
&-2\omega_{n-1}\Big(\int_0^{\infty}r^{n-2+2\sigma}\hat{w}_1(r)\hat{w}_1^{\prime}(r)\,\ud r\Big)\Big(\int_0^{\infty}s^{2-2\sigma}\varphi(s)\varphi^{\prime}(s)\,\ud s\Big).
\end{aligned}
$$
Using \cite[Lemma B.2]{KimMussoWei2018}, we have
\be\label{eq:appendix-x}
\begin{aligned}
&\int_{\mathbb{R}_{+}^{n+1}} t^{1-2 \sigma}|x|^{2}|\nabla_x W_{1}|^{2} \,\ud x\,\ud t\\
=&\frac{(n+2)(3n^2-6n-4\sigma^2+4)}{8(n-1)(1-\sigma^2)}\omega_{n-1}\Big(\int_0^{\infty}r^{n-3+2\sigma}\hat{w}_1^2(r)\,\ud r\Big)\Big(\int_0^{\infty}s^{3-2\sigma}\varphi^2(s)\,\ud s\Big).
\end{aligned}
\ee

Next, we will calculate $\int_{\mathbb{R}^{n}}|x|^{2}|\partial_t W_1(x, t)|^{2} \,\ud x$. By the Plancherel theorem,
$$
\begin{aligned}
\int_{\mathbb{R}^{n}}|x|^{2}|\partial_t W_1(x, t)|^{2} \,\ud x=&\||\cdot|{ } \partial_t W_1(\cdot, t)\|_{L^{2}(\mathbb{R}^{n})}^{2}\\
=&\int_{\mathbb{R}^{n}}|\xi| \hat{w}_1(\xi) \varphi^{\prime}(|\xi| t) \cdot(-\Delta)_{\xi}(|\xi| \hat{w}_1(\xi) \varphi^{\prime}(|\xi| t)) \,\ud \xi.
\end{aligned}
$$
Then employing \eqref{eq:appendixequ-1} and \eqref{eq:appendixequ-2}, we obtain
$$
\begin{aligned}
&\Delta_{\xi}(|\xi| \hat{w}_1(\xi) \varphi^{\prime}(|\xi| t))\\
=&(r \hat{w}_1(r) \varphi^{\prime}(r t))^{\prime\prime}+\frac{n-1}{r}(r \hat{w}_1(r) \varphi^{\prime}(r t))^{\prime}\\
=&(n-2+2\sigma)\hat{w}_1^{\prime}(r)\varphi^{\prime}(r t)+2rt\hat{w}_1^{\prime}(r)\varphi(r t)\\
&+\Big(\frac{2\sigma(n-2+2\sigma)}{r}+r(1+t^2)\Big)\hat{w}_1(r)\varphi^{\prime}(r t)+(n+2\sigma)t\hat{w}_1(r)\varphi(r t).
\end{aligned}
$$
Therefore, using \cite[Lemma B.2]{KimMussoWei2018},
\be\label{eq:appendix-t}
\begin{aligned}
&\int_{\mathbb{R}_{+}^{n+1}} t^{1-2 \sigma}|x|^{2}|\partial_t W_{1}|^{2} \,\ud x\,\ud t\\
=&-(n-2+2\sigma)\omega_{n-1}\Big(\int_{0}^{\infty}r^{n-2+2\sigma}\hat{w}_1(r)\hat{w}_1^{\prime}(r) \,\ud r\Big) \Big(\int_{0}^{\infty}s^{1-2 \sigma}\varphi^{\prime 2}(s)\,\ud s\Big)\\
&-2\omega_{n-1}\Big(\int_{0}^{\infty}r^{n-2+2\sigma}\hat{w}_1(r)\hat{w}_1^{\prime}(r) \,\ud r\Big) \Big(\int_{0}^{\infty}s^{2-2 \sigma}\varphi(s)\varphi^{\prime}(s)\,\ud s\Big)\\
&-2\sigma(n-2+2\sigma)\omega_{n-1}\Big(\int_{0}^{\infty}r^{n-3+2\sigma}\hat{w}_1^2(r) \,\ud r\Big) \Big(\int_{0}^{\infty}s^{1-2 \sigma}\varphi^{\prime 2}(s)\,\ud s\Big)\\
&-\omega_{n-1}\Big(\int_{0}^{\infty}r^{n-1+2\sigma}\hat{w}_1^2(r) \,\ud r\Big) \Big(\int_{0}^{\infty}s^{1-2 \sigma}\varphi^{\prime 2}(s)\,\ud s\Big)\\
&-\omega_{n-1}\Big(\int_{0}^{\infty}r^{n-3+2\sigma}\hat{w}_1^2(r) \,\ud r\Big) \Big(\int_{0}^{\infty}s^{3-2 \sigma}\varphi^{\prime 2}(s)\,\ud s\Big)\\
&-(n+2\sigma)\omega_{n-1}\Big(\int_{0}^{\infty}r^{n-3+2\sigma}\hat{w}_1^2(r) \,\ud r\Big) \Big(\int_{0}^{\infty}s^{2-2 \sigma}\varphi(s)\varphi^{\prime}(s)\,\ud s\Big)\\
=&\frac{3n^3-12n^2-4n\sigma^2+8n\sigma+12n-8\sigma^2-8\sigma}{8\sigma(1+\sigma)(n-1)}\omega_{n-1}\Big(\int_0^{\infty}r^{n-3+2\sigma}\hat{w}_1^2(r)\,\ud r\Big)\times\\
&\Big(\int_0^{\infty}s^{3-2\sigma}\varphi^2(s)\,\ud s\Big).
\end{aligned}
\ee

Similarly,
\be\label{eq:appendix-A0}
\begin{aligned}
A_{0}=&\int_{\mathbb{R}_{+}^{n+1}} t^{1-2 \sigma} W_{1}^{2} \,\ud x\,\ud t\\
=&\frac{3}{2(1-\sigma^2)}\omega_{n-1}\Big(\int_0^{\infty}r^{n-3+2\sigma}\hat{w}_1^2(r)\,\ud r\Big)\Big(\int_0^{\infty}s^{3-2\sigma}\varphi^2(s)\,\ud s\Big).
\end{aligned}
\ee

Combining \eqref{eq:appendix-x}, \eqref{eq:appendix-t} and \eqref{eq:appendix-A0} together, we see that,
$$
\int_{\mathbb{R}_{+}^{n+1}} t^{1-2 \sigma}|x|^{2}|\nabla W_{1}|^{2} \,\ud x\,\ud t=\frac{n(n^2-4n(1-\sigma)+4(1-\sigma-\sigma^2))}{4\sigma(n-1)}A_0.
$$
This finishes the proof.
\end{proof}

\bigskip

\noindent Z. Tang \& N. Zhou

\noindent School of Mathematical Sciences, Laboratory of Mathematics and Complex Systems, MOE, \\
Beijing Normal University, Beijing, 100875, China\\[1mm]
Email: \textsf{tangzw@bnu.edu.cn}\\
Email: \textsf{nzhou@mail.bnu.edu.cn}

\end{document}